\newcommand{\sprod}[2]{\langle {#1}, {#2}\rangle}
\newcommand{\norm}[1]{{\lVert #1 \rVert}}
\newcommand{\RR}{\mathbb{R}}
\newcommand{\PP}{\mathbb{P}}
\newcommand{\EE}{\mathbb{E}}
\newcommand{\IndFun}{\textbf{1}_}
\newcommand{\calR}{\mathcal{R}}
\newcommand{\ud}{\mathrm{d}}
\newcommand{\psiI}{V^{-}(1/t)}
\newcommand{\RInf}{\calR_{\alpha} }
\newcommand{\pl}[1]{\foreignlanguage{polish}{#1}}
\newtheorem{theorem}{Theorem}
\newtheorem{lemma}{Lemma}
\newtheorem{corollary}{Corollary}
\theoremstyle{definition}
\newtheorem{example}{Example}
\title[Generalized heat content]{A note on the generalized heat content\\ for L\'{e}vy processes}
\author{Wojciech Cygan}
\thanks{W.~Cygan was supported by National Science Centre (Poland): grant 2015/17/B/ST1/00062 and by Austrian Science Fund project FWF P24028.}
\author{Tomasz Grzywny}
\address{
		Wojciech Cygan\\
		Instytut Matematyczny\\
		Uniwersytet \pl{Wroc{\lll}awski}\newline
		Pl. Grun\-waldzki 2/4\\
		50-384 \pl{Wroc{\lll}aw}, Poland \newline
		\& Institut f\"{u}r Diskrete Mathematik\\
 		Technische Universit\"{a}t Graz\newline
 		Steyrergasse 30\\
 		8010 Graz, Austria}
\email{wojciech.cygan@uwr.edu.pl}
\address{
	Tomasz Grzywny\\
	 \pl{Wydzia{\lll}} Matematyki\\
	Politechnika \pl{Wroc{\lll}awska}\newline
	Wyb. \pl{Wyspia\'{n}skiego} 27\\
	50-370 \pl{Wroc\l{}aw}, Poland}
\email{tomasz.grzywny@pwr.edu.pl}
\subjclass[2010]{60G51, %Processes with independent increments; L´evy processes
				% 60G52 % Stable processes,
 				60J75, % Jump processes
  				35K05} % Heat equation
\keywords{heat content, isotropic L\'{e}vy process, multivariate regular variation}
\begin{document}
\selectlanguage{english}

\begin{abstract}
Let $\mathbf{X}=\{X_t\}_{t\geq 0}$ be a L\'{e}vy process in $\RR^d$ and $\Omega$ be an open subset of $\RR^d$ with finite Lebesgue measure. The quantity 
$H_{\Omega} (t) = \int_{\Omega}\PP^{x} (X_t\in \Omega )\, \ud x$ 
is called the heat content. 
 In this article we consider its generalized version $H_g^\mu (t) = \int_{\RR^d}\EE^{x} g(X_t)\mu( \ud x )$, where $g$ is  a bounded function and $\mu$ a finite Borel measure. We study its asymptotic behaviour at zero for various classes of L\'{e}vy processes.
\end{abstract}

\maketitle
\section{Introduction}

Let $\mathbf{X}=(X_t)_{t\geq 0}$ be a L\'{e}vy process in $\RR ^d$ with the distribution $\PP$ and such that $X_0=0$. 
We denote by $p_t(\ud x)$ the distribution of the random variable $X_t$ and 
we use the standard notation $\PP^x$ for the distribution related to the process $\mathbf{X}$ started at $x\in \RR^d$. 

Let $\Omega$ be a non-empty open subset of $\RR ^d$ such that its Lebesgue measure $|\Omega|$ is finite. 
We consider the following quantity associated with the process $\mathbf{X}$,
\begin{align}\label{Heat_Content}
H _{\Omega } (t) = \int_{\Omega}\PP^{x} (X_t\in \Omega)\, \ud x =  \int_{\Omega}\int_{\Omega -x}p_t( \ud y)\ud x,\quad t\geq 0.
\end{align} 
Observe that the function $u(t,x) = \int_{\Omega-x}p_t(\ud y)$ is the weak solution of the initial value problem
\begin{align}\label{Heat_eq}
\frac{\partial}{\partial t}u(t,x) &= \mathcal{L}\, u(t,x),\quad t>0,\, x\in \RR^d,\\
u(0,x) &= \IndFun{\Omega}(x)\notag ,
\end{align}
where $\mathcal{L}$ is the infinitesimal generator of the process $\mathbf{X}$, see \eqref{gener}. Therefore, $H_\Omega (t)$ can be interpreted as the amount of \textit{heat} in $\Omega$ if its initial temperature is one whereas the initial temperature of $\Omega^c$ is zero. The quantity $H_\Omega (t)$ is called the heat content. The asymptotic behaviour - as $t$ goes to zero - of the heat content related to the Brownian motion, either on $\RR^d$ or on compact manifolds, were studied in many papers among which \cite{vanDenBerg1_POT}, \cite{vanDenBerg1}, \cite{vanDenBerg2}, \cite{vanDenBerg3}, \cite{vanDenBerg_4}, \cite{vanDenBerg5}. The heat content for the isotropic stable processes in $\RR^d$ was studied in \cite{Valverde1}, see also \cite{Valverde2} and \cite{Valverde3}. The direct forerunner of the present paper is article \cite{CG} where asymptotic behaviour of \eqref{Heat_Content} were found for numerous examples of L\'{e}vy processes.					

In this note we study an extended version of the heat content \eqref{Heat_Content}. Namely,  
for a bounded function $g$ and a finite Borel measure $\mu$, we consider the quantity
\begin{align*}
H ^\mu _{g} (t) = \int_{\RR^d}\EE^{x} g(X_t)\mu( \ud x) =  \int_{\RR^d}v(t,x) \mu( \ud x),\quad t\geq 0.
\end{align*}
Here, the function $v(t,x) = \EE^{x} g(X_t)$ is the weak solution to equation \eqref{Heat_eq} with the initial condition $v(0,x) = g(x).$
Thus, $H^\mu_g(t)$ can be interpreted as the amount of \textit{heat} in the set $\mathrm{supp}(\mu)$ if its initial temperature is governed by the function $g$. On the other hand, the measure $\mu$ can be regarded as the initial distribution on $\RR^d$. Notice that taking $g=\IndFun{\Omega}$ and $\mu( \ud x) = \IndFun{\Omega}\ud x$ we obtain that $H_g^\mu$ is equal to the heat content defined at \eqref{Heat_Content}. 

On the basis of the methods developed in \cite{CG}, we study the asymptotic behaviour of the quantity $H_g^\mu (t)$. We now display the results together with necessary facts and definitions.

\subsection*{Notation.}
By $B_R$ we denote the closed ball $\{x\in \RR^d:\, \norm{x}\leq R \}$ and by $\mathbb{S}^{d-1}$ the unit sphere in $\mathbb{R}^d$. Positive constants are denoted by $c,C,C_1$ etc. 
We write: $f(x)\asymp g(x)$ if there are $c,C>0$ such that $cg(x)\leq f(x)\leq Cg(x)$; $f(x)=o(g(x))$ at $x_0$ if $\lim_{x\to x_0}f(x)/g(x)=0$, and $f(x)\sim g(x)$, as $x\to x_0$, if $\lim_{x\to x_0} f(x)/g(x)=1$. The generalized inverse $V^-$ of the function $V$ is given by $V^-(u) = \inf \{x\geq 0:\, V(x)\geq u\}$. $C_b(\RR^d)$ is the set of all bounded and continuous functions in $\mathbb{R}^d$ whereas $C_0(\RR^d)$ is the set of all continuous functions which vanish at infinity.

\subsection*{Results and basic facts.}
The L\'{e}vy-Khintchine exponent $\psi (x)$ of the L\'{e}vy process $\mathbf{X}$ is given by the formula
\begin{align}\label{charact_expo}
\psi(x) = \sprod{x}{Ax} - i\sprod{x}{\gamma } - 
\int_{\RR ^d}\left(e^{i\sprod{x}{y}} - 1 - i\sprod{x}{y} \IndFun{\{\norm{y} \leq 1\}} \right) \nu(\ud y) ,\ \ x\in \RR ^d,
\end{align}
where $A$ is a symmetric non-negative definite $d\times d$ matrix, $\gamma\in \RR ^d$ and $\nu$ is a L\'{e}vy measure, that is $\nu(\{0\})=0$ and $\int_{\RR ^d} \left( 1\wedge \norm{y}^2 \right)\, \nu(\ud y) <\infty $.

The heat semigroup $\{T_t\}_{t\geq 0}$ related to the L\'{e}vy process $\mathbf{X}$ is 
\begin{align*}
T_tf(x) = \int_{\RR^d} f(x+y)p_t(\ud y),\quad \mathrm{for}\ f\in C_0(\RR^d),
\end{align*}
and the generator $\mathcal{L}$ of the process $\mathbf{X}$ is a linear operator defined by
\begin{align}\label{gener}
\mathcal{L}f = \lim_{t\to 0^+}t^{-1} \left( T_tf - f \right),
\end{align}
with the domain $\mathrm{Dom}(\mathcal{L})$ which is the set of all $f$ such that the right-hand side of \eqref{gener} exists in the sense of uniform convergence.
By \cite[Theorem 31.5]{Sato}, we have $C_0^2(\RR^d)\subset \mathrm{Dom}(\mathcal{L})$ and for any $f\in C_0^2(\RR^d)$ it has the form
\begin{align}\label{Gener_form}
\begin{aligned}
\mathcal{L} f(x) &= \sum_{j,k=1}^d A_{jk}\partial^2_{jk} f(x)+\sprod{\gamma}{\nabla f(x)} \\
&\quad + \int_{\RR^d} \left(f(x+z)-f(x)
-\IndFun{\norm{z}<1}\sprod{z}{\nabla f(x)} \right) \nu(\ud z),\quad x\in \RR^d,
\end{aligned}
\end{align}
where $(A,\gamma ,\nu)$ is the triplet from \eqref{charact_expo}. We refer the reader to \cite[Section 31]{Sato} or \cite[Section 3.3]{Appl} for a detailed discussion on infinitesimal generators of semigroups related to L\'{e}vy processes.

To start our discussion on the small time behaviour of $H_g^\mu(t)$ we make an important observation. 
Let $g$ be a bounded function and $\mu$ a finite Borel measure. We set $\check{\mu }(G) =\mu (-G)$, for any Borel set 
$G\subset \RR^d$ and consider the following convolution
\begin{align*}
r (x) = g\ast \check{\mu }(x) = \int_{\RR^d} g(x+y)\mu(\ud y).
\end{align*}
We can then write
 \begin{align}\label{Generator_formula}
 \begin{split}
 H_g^\mu (t) &=  \int_{\RR^d}\EE^{x} g(X_t)\mu(\ud x) = \int_{\RR^d} \int_{\RR^d} g(y+x)p_t(\ud y)\mu(\ud x)\\
 &= \int_{\RR^d}g\ast \check{\mu }(y)\, p_t(\ud y)  = T_tr(0)
	\end{split}  
  \end{align}
and therefore
\begin{align*}
\lim_{t\to 0^+} t^{-1} \left( H_g^\mu (t) -H_g^\mu (0)\right) = \mathcal{L}r(0),
\end{align*}
whenever $r$ belongs to $\mathrm{Dom}(\mathcal{L})$. Notice that in front of formula \eqref{Generator_formula}, we are rather interested in the pointwise limit in \eqref{gener} instead of the uniform convergence. Thus, for some special classes of L\'{e}vy processes we will weaken the assumption that $r\in \mathrm{Dom}(\mathcal{L})$. This is summarized in Theorem \ref{Thm_X_bdd_variation}.	

%%%%% Theorem fo X of bounded variation
 Recall that according to \cite[Theorem 21.9]{Sato} a L\'{e}vy process $\mathbf{X}$ has finite variation on any interval $(0,t)$ if and only if $A=0$ and $ \int_{\norm{x}\leq 1}\norm{x}\nu (\ud x)<\infty $.
In this case the L\'{e}vy-Khintchine exponent has the simplified form
\begin{align}\label{gamma_0}
\psi (x) = i\sprod{x}{\gamma _0} + \int_{\RR^d}\left( 1-e^{i\sprod{x}{y}}\right)\nu (\ud y),\quad \mathrm{with}\ \ 
\gamma _0 = \int_{\norm{y}\leq 1}y\, \nu (\ud y) - \gamma .
\end{align}
Notice that for symmetric L\'{e}vy processes with finite variation we have $\int_{\norm{y}\leq 1}y\, \nu (\ud y) =0$.
\begin{theorem}\label{Thm_X_bdd_variation}
	Let $\mathbf{X}$ be a L\'{e}vy process in $\RR^d$ with the triplet $(0 ,\gamma , \nu)$ and such that
	\begin{align*}
	\int _{\norm{x}<1}\norm{x}^\beta \nu (\ud x)<\infty,\quad \mathrm{for\ some}\ 0\leq \beta <2.
	\end{align*}
	Assume that $g$ is bounded, $\mu$ is finite Borel measure and that the function $r = g\ast \check{\mu}$ is in $C_b(\RR^d)$. We distinguish the following cases.\\
	1.
	$0\leq \beta \leq 1$. In this case we assume that $\gamma_0 =0$ and $|r(x)-r(0)|\leq C\norm{x}^\beta$, for $\norm{x}<1$. Then
	\begin{align*}
	\lim_{t\to 0^+}t^{-1}\left( H_g^\mu (t) -H_g^\mu (0)\right)
	 = \int_{\RR^d} (r(x)-r(0))\nu (\ud x).
	\end{align*}
	2. $1\leq \beta <2$. We consider two cases
	\begin{itemize}
	\item[(i)]
	If $\mathbf{X}$ is symmetric (i.e. $\gamma =0$ and $\nu $ is symmetric) we assume that function $r$ satisfies
	 $|r(x)+r(-x)-2r(0)|\leq C\norm{x}^\beta$, for $\norm{x}<1$. Then
	\begin{align*}
	\lim_{t\to 0^+}t^{-1}\left( H_g^\mu (t) - H_g^\mu (0)\right) = \frac{1}{2} \int_{\RR^d} (r(x)+r(-x)-2r(0))\nu (\ud x).
	\end{align*}
	\item[(ii)]
	If $\mathbf{X}$ is arbitrary, assume that $r$ is differentiable at $0$ and such that\\
	 $|r(x)-r(0)-\sprod{x}{\nabla r(0)}|\leq C\norm{x}^\beta $, for $\norm{x}<1$. Then
	\begin{align*}
	\lim_{t\to 0^+}t^{-1}\left( H_g^\mu (t) - H_g^\mu(0)\right) = 
	\big\langle \gamma  											% - \int_{R\leq \norm{x}\leq 1}x\nu (\ud x)
				,\, \nabla r(0)\big\rangle
	 +
	\int_{\RR^d} \big( r(x)-r(0)-\sprod{x}{\nabla r(0)}\IndFun{\{\norm{x}\leq 1\}}\big)\nu (\ud x).
																	%\IndFun{\{\norm{x}<R\}}\big)\nu (\ud x).
	\end{align*}
	\end{itemize}
\end{theorem}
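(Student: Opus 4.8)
The plan is to use the identity $H^{\mu}_{g}(t)=T_tr(0)=\EE[r(X_t)]$ from \eqref{Generator_formula} (the expectation being for the process started at $0$), which reduces everything to computing $\lim_{t\to0^+}t^{-1}(\EE[r(X_t)]-r(0))$, and to approximate $\mathbf X$ by its large jumps. For $\epsilon\in(0,1)$ write $X=Z^\epsilon+Y^\epsilon$, where $Y^\epsilon$ is the compound Poisson process formed by the jumps of $\mathbf X$ of size $>\epsilon$ (rate $\lambda_\epsilon:=\nu(\{\norm{y}>\epsilon\})<\infty$) and $Z^\epsilon:=X-Y^\epsilon$ is the independent remainder, a L\'evy process with triplet $(0,\gamma_\epsilon,\nu_\epsilon)$, $\nu_\epsilon:=\nu\,\IndFun{\{\norm{y}\le\epsilon\}}$, $\gamma_\epsilon:=\gamma-\int_{\epsilon<\norm{y}\le1}y\,\nu(\ud y)$. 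Conditioning on the number of jumps of $Y^\epsilon$ in $(0,t]$ and using $\norm{r}_\infty<\infty$ for the terms with at least two jumps,
\begin{align*}
\EE[r(X_t)]=e^{-\lambda_\epsilon t}\EE[r(Z^\epsilon_t)]+te^{-\lambda_\epsilon t}\int_{\norm{y}>\epsilon}\EE[r(Z^\epsilon_t+y)]\,\nu(\ud y)+R_{\epsilon,t},\qquad|R_{\epsilon,t}|\le\tfrac12\norm{r}_\infty(\lambda_\epsilon t)^2 .
\end{align*}
Dividing by $t$ and letting $t\to0^+$ with $\epsilon$ fixed: since $Z^\epsilon_t\to0$ a.s.\ and $\nu(\{\norm{y}>\epsilon\})<\infty$, dominated convergence gives $e^{-\lambda_\epsilon t}\int_{\norm{y}>\epsilon}\EE[r(Z^\epsilon_t+y)]\,\nu(\ud y)\to\int_{\norm{y}>\epsilon}r(y)\,\nu(\ud y)$, while $t^{-1}(e^{-\lambda_\epsilon t}-1)r(0)\to-\lambda_\epsilon r(0)$ and $t^{-1}R_{\epsilon,t}\to0$. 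Hence the right-hand side reduces, up to the small-jump term $e^{-\lambda_\epsilon t}t^{-1}(\EE[r(Z^\epsilon_t)]-r(0))$, to $\int_{\norm{y}>\epsilon}(r(y)-r(0))\,\nu(\ud y)$, and everything hinges on controlling that small-jump term.

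The quantitative input is $\EE[\norm{Z^\epsilon_t}^\beta]\le t\,k_\epsilon$ with $k_\epsilon:=\int_{\norm{y}\le\epsilon}\norm{y}^\beta\,\nu(\ud y)$, where $k_\epsilon\to0$ as $\epsilon\to0$ because $\int_{\norm{y}<1}\norm{y}^\beta\,\nu(\ud y)<\infty$. In case~$1$ the hypothesis $\gamma_0=0$ forces $\gamma_\epsilon=\int_{\norm{y}\le\epsilon}y\,\nu(\ud y)$, so $Z^\epsilon$ is a driftless pure-jump process of finite variation and the bound follows from subadditivity of $x\mapsto x^\beta$ (here $\beta\le1$) along the jumps of $Z^\epsilon$ together with the compensation formula. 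In case~$2(i)$ symmetry forces $\gamma_\epsilon=0$, so $Z^\epsilon$ is symmetric with triplet $(0,0,\nu_\epsilon)$; writing $\EE[\norm{Z^\epsilon_t}^\beta]=c_{d,\beta}\int_{\RR^d}(1-e^{-t\psi_{Z^\epsilon}(\xi)})\norm{\xi}^{-d-\beta}\,\ud\xi$, using $1-e^{-u}\le u$, and then combining $\norm{y}^\beta=c_{d,\beta}\int_{\RR^d}(1-\cos\sprod{\xi}{y})\norm{\xi}^{-d-\beta}\,\ud\xi$ (valid for $0<\beta<2$) with Tonelli's theorem gives exactly $\EE[\norm{Z^\epsilon_t}^\beta]\le t\,k_\epsilon$. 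Granting this: in case~$1$, $|r(x)-r(0)|\le C\norm{x}^\beta$ on $B_1$ and boundedness of $r$ yield $|\EE[r(Z^\epsilon_t)]-r(0)|\le C\,\EE[\norm{Z^\epsilon_t}^\beta]+2\norm{r}_\infty\PP(\norm{Z^\epsilon_t}\ge1)\le C' t\,k_\epsilon$; in case~$2(i)$ one first symmetrizes, $\EE[r(Z^\epsilon_t)]-r(0)=\tfrac12\EE\big(r(Z^\epsilon_t)+r(-Z^\epsilon_t)-2r(0)\big)$, and then $|r(x)+r(-x)-2r(0)|\le C\norm{x}^\beta$ plays the role of the Hölder bound. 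In both cases $t^{-1}|\EE[r(Z^\epsilon_t)]-r(0)|\le C'k_\epsilon$.

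For case~$2(ii)$ I would first split $r=r(0)+\ell+\tilde r$, where $\ell(x):=\sprod{x}{\nabla r(0)}\,\theta(x)$ with $\theta\in C^\infty(\RR^d)$ of compact support and $\theta\equiv1$ on $B_1$. Then $\ell\in C_0^2(\RR^d)\subset\mathrm{Dom}(\mathcal{L})$, so $t^{-1}(\EE[\ell(X_t)]-\ell(0))\to\mathcal{L}\ell(0)$ by \cite[Theorem 31.5]{Sato} and \eqref{gener}, while $\tilde r:=r-r(0)-\ell$ is bounded, continuous, vanishes at $0$, coincides on $B_1$ with $\rho:=r-r(0)-\sprod{\cdot}{\nabla r(0)}$, and hence satisfies $|\tilde r(x)|\le C\norm{x}^\beta$ near $0$ and $\nabla\tilde r(0)=0$. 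Running the scheme of the first paragraph with $\tilde r$ in place of $r$ reduces matters to bounding $t^{-1}\EE[\tilde r(Z^\epsilon_t)]$; here I write $Z^\epsilon_t=\gamma_\epsilon t+M^\epsilon_t$ with $M^\epsilon$ the martingale of compensated small jumps, use $\EE[\norm{M^\epsilon_t}^2]=t\int_{\norm{y}\le\epsilon}\norm{y}^2\,\nu(\ud y)$ together with the fractional-moment bound $\EE[\norm{M^\epsilon_t}^\beta]\le C_\beta\,t\int_{\norm{y}\le\epsilon}\norm{y}^\beta\,\nu(\ud y)$ for $1\le\beta<2$, and split $\EE[|\tilde r(Z^\epsilon_t)|]$ according to whether $\norm{Z^\epsilon_t}$ lies below a threshold chosen first small in $\epsilon$ then small in $t$; the bound $|\tilde r(x)|\le C\norm{x}^\beta$ — and, at the endpoint $\beta=1$, the stronger $\tilde r(x)=o(\norm{x})$ coming from differentiability of $r$ at $0$ — lets one absorb the deterministic drift $\gamma_\epsilon t$ and conclude $\limsup_{t\to0^+}t^{-1}|\EE[\tilde r(Z^\epsilon_t)]|\le\omega(\epsilon)$ with $\omega(\epsilon)\to0$.

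Assembling the pieces, in each case one obtains $\limsup_{t\to0^+}\big|t^{-1}(\EE[r(X_t)]-r(0))-L_\epsilon\big|\le\omega(\epsilon)$ with $\omega(\epsilon)\to0$, where $L_\epsilon$ is the claimed limit with the integral restricted to $\{\norm{y}>\epsilon\}$ (in case~$2(ii)$, $L_\epsilon=\mathcal{L}\ell(0)+\int_{\norm{y}>\epsilon}\tilde r(y)\,\nu(\ud y)$, which rearranges to the stated expression since $\ell+\tilde r=r-r(0)$). Letting $\epsilon\to0$ and applying dominated convergence with the $\nu$-integrable majorant $C\norm{y}^\beta\,\IndFun{\{\norm{y}<1\}}+2\norm{r}_\infty\,\IndFun{\{\norm{y}\ge1\}}$ — using the symmetry of $\nu$ in case~$2(i)$ to pass to the symmetrized integrand — identifies $\lim_{\epsilon\to0}L_\epsilon$ with the asserted value and finishes the proof. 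The main obstacle is the case~$2(ii)$ estimate on the small-jump contribution, especially at $\beta=1$: there $\gamma_\epsilon$ does not tend to $0$ as $\epsilon\to0$, so a crude moment bound on $\tilde r(Z^\epsilon_t)$ is insufficient and one must genuinely exploit the differentiability of $r$ at $0$ (through $\nabla\tilde r(0)=0$); a secondary technical point is the fractional-moment bound for the compensated small jumps when $1<\beta<2$.
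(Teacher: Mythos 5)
Your argument is correct in substance, but it takes a genuinely different route from the paper. The paper also starts from $H_g^\mu(t)=T_tr(0)$, but then splits $r(x)-r(0)$ (or its symmetrized/linearly corrected version $F$) with a cutoff $\chi_\varepsilon$ and outsources both halves to external results: \cite[Corollary 8.9]{Sato} for the vague convergence $t^{-1}p_t\to\nu$ away from the origin, and \cite[Theorem 4.1]{Kuhn} for the small-time moment asymptotics of $\phi_\beta(x)=\chi_\varepsilon(x)\norm{x}^\beta$ near the origin (this is the content of its Lemma~\ref{Lemma_est}); case 2(ii) at $\beta=1$ is handled by shifting to $X_t^0=X_t+t\gamma_0$ and invoking the K\"uhn--Schilling result for Lipschitz functions, plus the generator formula \eqref{Gener_form} applied to $x\chi(x)\in C_0^\infty$. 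You instead make the whole thing self-contained via the L\'evy--It\^o split $X=Z^\epsilon+Y^\epsilon$: conditioning on the number of large jumps replaces Sato's Corollary 8.9, and your explicit bounds $\EE\norm{Z_t^\epsilon}^\beta\le tk_\epsilon$ (subadditivity along jumps for $\beta\le1$; the Fourier representation of $\norm{\cdot}^\beta$ with $1-e^{-u}\le u$ in the symmetric case; BDG-type fractional moments of the compensated martingale for $1<\beta<2$) replace the K\"uhn--Schilling input. Both routes isolate the same hard spot, namely case 2(ii) at $\beta=1$ where the drift $\gamma_\epsilon$ does not vanish and one must use $\tilde r(x)=o(\norm{x})$; you diagnose this correctly. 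The one place your sketch is thin is precisely there: after choosing the threshold $\rho_0$ from the $o(\norm{x})$ modulus, the contribution of the event $\{\norm{Z_t^\epsilon}>\rho_0\}$ is not controlled by the moment bounds alone (Chebyshev only gives $O(t)$, not $o(t)$); you need the additional observation that for $\epsilon<\rho_0$ the process $Z^\epsilon$ has no jumps exceeding $\epsilon$, whence $\PP(\norm{Z_t^\epsilon}\ge\rho_0)=o(t)$ because $t^{-1}\PP(\norm{Z_t^\epsilon}\ge\rho_0)\to\nu_\epsilon(\{\norm{y}\ge\rho_0\})=0$. With that supplied (and the symmetrization of the $\epsilon$-truncated integral in case 2(i), which you do note), the assembly and the final passage $\epsilon\to0$ by dominated convergence go through as you describe. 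The trade-off is clear: the paper's proof is shorter but leans on two cited theorems; yours is longer but elementary and makes the mechanism (large jumps produce the L\'evy-measure integral, small jumps are negligible at order $t$) completely explicit.
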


We emphasize that Theorem \ref{Thm_X_bdd_variation} was recently obtained by K\"{u}hn and Schilling \cite{Kuhn} for a wider range of stochastic processes, namely for the class of \textit{rich L\'{e}vy-type} processes, cf. Theorem 3.5 and Theorem 4.1 in \cite{Kuhn}. Our result is expressed in terms of the heat content and extends slightly an admissible classes of functions for L\'{e}vy processes.
% We also improve the result for non-symmetric L\'{e}vy processes.

%%% Symmetric process and multivariate regular variation
The next theorem provides the asymptotic behaviour of the generalized heat content under the assumption that the L\'{e}vy-Khintchine exponent $\psi$ is a multivariate regularly varying function, see condition \eqref{Mult_Reg_Cond} and \cite[Chapter 6]{Resnick} for an elaborate approach.
Recall that in the one-variable case a function $f(r)$ is regularly varying of index $\alpha$ at infinity, denoted by $f\in \RInf$, if for any $\lambda >0$,
$\lim_{r\to \infty}\frac{f(\lambda r)}{f(r)} = \lambda ^\alpha $.
The following property, so-called \textit{Potter bounds}, of regularly varying functions appears to be very useful, see
\cite[Theorem 1.5.6]{bgt}. For every $C > 1$ and $\varepsilon > 0$ there is $x_0=x_0(C,\varepsilon)>0$ such that for all
$ x, y \geq x_0$
\begin{equation}
	\label{eq:14}
	\frac{f(x)}{f(y)}\leq C \max \left\{(x/y)^{\alpha -\varepsilon} , (x/y)^{\alpha +\varepsilon} \right\}.
\end{equation}
For a given function $\psi$ we define the related non-decreasing function $\psi ^*$ by 
\begin{align*}
\psi^*(u) = \sup_{\norm{x}\leq  u} \psi(x) .
\end{align*}

\begin{theorem}\label{Multivariate_Reg_Var_thm}
Let $\beta \in [1,2)$ be fixed. Let $\mathbf{X}$ be a symmetric L\'{e}vy process in $\RR^d$ with 
the L\'{e}vy-Khintchine exponent $\psi$.
We assume that
\begin{align}\label{PsiStar}
\psi(x) \asymp \psi^*(x),\quad \mathrm{for}\ \norm{x}\geq 1.
\end{align}
Suppose that there is a function $V\in \RInf$ with $\alpha \in (\beta ,2]$ and a continuous function\\ $\Lambda \colon \mathbb{S}^{d-1}\to (0,\infty)$ such that  
\begin{align}\label{Mult_Reg_Cond}
\lim_{s\to \infty} \frac{\psi (s\theta)}{V(s)} = \Lambda (\theta), \quad \theta \in \mathbb{S}^{d-1}.
\end{align}
Let $g$ be a bounded function and $\mu$ a finite Borel measure. Set $r =g\ast \check{\mu}$ and assume that the below limit exists 
\begin{align*}
\lim_{t\to 0^+}t^{-\beta}\left( r(t\theta)+r(-t\theta)-2r(0) \right)=R_\beta(\theta), \quad \mathrm{for\ all}\ \theta \in \mathbb{S}^{d-1}.
\end{align*} 
Moreover, suppose that $r$ satisfies
\begin{align}\label{beta_r_cond}
|r(x)+r(-x)-2r(0)|\leq L\norm{x}^\beta,\quad \mathrm{for}\ L>0.
\end{align}
Then
\begin{align*}
\lim_{t\to 0^+} [V^-(1/t)]^\beta \left( H_g^\mu(t) -H_g^\mu (0) \right) = 
\frac{1}{2}\int_{\RR^d} R_\beta\left( x/ \norm{x} \right) \norm{x}^\beta p_{\Lambda}(x) \ud x ,
\end{align*}
where the density function $p_{\Lambda}(x)$ is uniquely determined by the formula
\begin{align}\label{Lambda_eq}
e^{-\Lambda \left( \frac{x}{\norm{x}} \right)\norm{x}^\alpha} = \int_{\RR^d} e^{i\sprod{x}{y}}p_{\Lambda} (y)\ud y.
\end{align}
\end{theorem}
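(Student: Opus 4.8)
The plan is to start from the representation $H_g^\mu(t) - H_g^\mu(0) = T_t r(0) - r(0) = \int_{\RR^d}(r(y)-r(0))\,p_t(\ud y)$ from \eqref{Generator_formula}, symmetrize using the symmetry of $\mathbf{X}$ (so that $p_t$ is symmetric) to obtain
\begin{align*}
H_g^\mu(t) - H_g^\mu(0) = \frac12 \int_{\RR^d}\big(r(y)+r(-y)-2r(0)\big)\,p_t(\ud y),
\end{align*}
and then rescale the integration variable by $y = V^-(1/t)\,x$. Writing $a_t := V^-(1/t)$, which tends to $0$ as $t\to0^+$ and is regularly varying of index $1/\alpha$ at $0$ by the standard inverse theory for regularly varying functions, the goal becomes showing
\begin{align*}
a_t^\beta\,(H_g^\mu(t)-H_g^\mu(0)) = \frac12 \int_{\RR^d} a_t^{-\beta}\big(r(a_t x)+r(-a_t x)-2r(0)\big)\, a_t^d\, p_t(a_t x)\,\ud x \longrightarrow \frac12\int_{\RR^d} R_\beta(x/\norm{x})\,\norm{x}^\beta\, p_\Lambda(x)\,\ud x.
\end{align*}
This is a dominated-convergence argument in which two facts must be established: pointwise convergence of the integrand, and a uniform integrable bound.

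For the pointwise convergence, the first factor $a_t^{-\beta}(r(a_t x)+r(-a_tx)-2r(0)) = \norm{x}^\beta \cdot (a_t\norm{x})^{-\beta}(r(a_t\norm{x}\theta)+r(-a_t\norm{x}\theta)-2r(0))$ with $\theta = x/\norm{x}$ converges to $\norm{x}^\beta R_\beta(x/\norm{x})$ by the hypothesis defining $R_\beta$, since $a_t\to 0$. The second factor, $a_t^d p_t(a_t x)$, is the density of $X_t/a_t$; I would show this converges to $p_\Lambda(x)$ via characteristic functions. Indeed the characteristic function of $X_t/a_t$ at $\xi$ is $\exp(-t\,\psi(\xi/a_t))$, and using $t = 1/V(1/a_t) \approx 1/V(1/a_t)$ together with the multivariate regular variation hypothesis \eqref{Mult_Reg_Cond}, $t\,\psi(\xi/a_t) = \frac{\psi((\norm{\xi}/a_t)(\xi/\norm{\xi}))}{V(1/a_t)}\cdot\frac{V(1/a_t)}{V(\norm{\xi}/a_t)}\cdot\frac{V(\norm{\xi}/a_t)}{V(1/a_t)}\cdot t\,V(1/a_t)$, which by \eqref{Mult_Reg_Cond} and regular variation of $V$ converges to $\Lambda(\xi/\norm{\xi})\norm{\xi}^\alpha$; hence $\exp(-t\psi(\xi/a_t)) \to \exp(-\Lambda(\xi/\norm{\xi})\norm{\xi}^\alpha) = \widehat{p_\Lambda}(\xi)$, and one needs that this limit is indeed a characteristic function of an integrable density $p_\Lambda$ (defining it through \eqref{Lambda_eq}), plus a local uniform control to upgrade convergence of characteristic functions to convergence of densities — this is where the condition \eqref{PsiStar} relating $\psi$ and $\psi^*$, Potter bounds \eqref{eq:14}, and lower bounds on $\psi$ (e.g. $\psi(x)\gtrsim \norm{x}^{\alpha-\varepsilon}$ for large $\norm{x}$) are used to get a uniform integrable dominating function for the densities $a_t^d p_t(a_t\cdot)$, for instance via $\sup_t a_t^d p_t(a_t x) \le C(1+\norm{x})^{-d-\delta}$ by Fourier inversion.

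For the dominating function on the whole integrand I would split $\RR^d$ into $\norm{x}\le 1$ and $\norm{x}>1$. On $\norm{x}\le 1$, the bound \eqref{beta_r_cond} gives $a_t^{-\beta}|r(a_tx)+r(-a_tx)-2r(0)| \le L\norm{x}^\beta \le L$, so the integrand is bounded by $L$ times the uniform density bound above, which is integrable near the origin. On $\norm{x}>1$, \eqref{beta_r_cond} still gives $a_t^{-\beta}|r(a_tx)+r(-a_tx)-2r(0)|\le L\norm{x}^\beta$, and this is controlled against the polynomial decay $C(1+\norm{x})^{-d-\delta}$ of the density bound provided $\delta > \beta$; since $\alpha > \beta$ and $\alpha\le 2$, choosing $\varepsilon$ small enough in the Potter/lower-bound estimates yields such a $\delta$. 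Assembling the uniform bound $L\norm{x}^\beta\wedge L \cdot C(1+\norm{x})^{-d-\delta} \in L^1(\RR^d)$, dominated convergence applies and produces the asserted limit.

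The main obstacle is the second item of the pointwise/domination analysis: proving that the rescaled densities $a_t^d p_t(a_t x)$ converge to $p_\Lambda(x)$ \emph{and} admit a uniform integrable majorant. Convergence of characteristic functions is routine, but transferring it to pointwise convergence of densities, and especially producing the uniform polynomial tail bound $\sup_t a_t^d p_t(a_t x)\le C(1+\norm{x})^{-d-\delta}$, requires careful use of \eqref{PsiStar}, the Potter bounds, and an argument that $\psi$ grows at least like a positive power of $\norm{x}$ — this is exactly the technical heart carried over from \cite{CG}, and I would isolate it as a lemma before invoking dominated convergence.
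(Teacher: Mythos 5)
Your overall architecture --- symmetrize using the symmetry of $p_t$, rescale by $a_t=V^-(1/t)$, identify the limit of the rescaled densities through characteristic functions, and conclude by dominated convergence --- is the same as the paper's, and your treatment of the pointwise convergence $a_t^d p_t(a_t x)\to p_\Lambda(x)$ (Fourier inversion plus the Potter-bound majorant $e^{-C\norm{\xi}^{\alpha/2}}$ extracted from \eqref{PsiStar} and \eqref{Mult_Reg_Cond}) matches the paper's Lemma \ref{Lemma_stable}. The genuine gap is the dominating function you propose on $\norm{x}>1$: the uniform polynomial tail bound $\sup_t a_t^d p_t(a_t x)\le C(1+\norm{x})^{-d-\delta}$ with $\delta>\beta$ does not follow from the hypotheses. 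Fourier inversion with an integrable characteristic function only yields the on-diagonal bound $a_t^d p_t(a_t x)\le a_t^d p_t(0)\le C$; to extract spatial decay of order $d+\delta$ you would need uniform control of roughly $d+\delta$ derivatives of $\xi\mapsto e^{-t\psi(\xi/a_t)}$, i.e.\ moment and smoothness information on $\nu$ that is not assumed, and the usual shortcut converting bounds on $\PP(\norm{X_t}>r)$ into pointwise density bounds requires unimodality, which is also not assumed --- the process is merely symmetric. As written, your dominated-convergence step over all of $\RR^d$ has no valid majorant in the region where $\norm{x}^\beta$ grows.

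The paper avoids this entirely by never using pointwise density bounds in the tail. It truncates at radius $M/V^-(1/t)$ for a fixed $M>1$: on the rescaled bounded region $\norm{x}<M$ it applies dominated convergence with the majorant $CL\norm{x}^\beta\IndFun{\{\norm{x}<M\}}$, using only the bound $p_t(x)\le p_t(0)\le C[V^-(1/t)]^d$ from \cite{BGR}; on the complement it bounds the contribution by $C\int_0^1\PP\big(\norm{X_t}>\frac{M}{V^-(1/t)}\vee u^{1/\beta}\big)\,\ud u$ and estimates this via Pruitt's inequality \eqref{estimate_Pruitt}, the comparisons $h\asymp\psi^*\asymp V$, Karamata's theorem and Potter bounds, obtaining a quantity of order $M^{\beta-\alpha+\varepsilon}$ that vanishes as $M\to\infty$. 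Replacing your global majorant by this truncation-plus-Pruitt argument closes the gap, and the rest of your proof then goes through.
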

The particular choice $g= \IndFun{\Omega}$ and $\mu (\ud x)=  \IndFun{\Omega} \ud x$ leads to the result for the classical heat content defined at \eqref{Heat_Content}. Then the function $r (x)= g\ast \check{\mu}(x) = |\Omega \cap (\Omega +x)|$ is the covariance function of the set $\Omega$ and the function $R_\beta$ for $\beta =1$ is determined in terms of the related directional derivative, cf. \cite[Subsection 2.1]{CG} for more details. 

%%%%%%%%%%%% Isotropic and \alpha >1
The following corollary gives the asymptotic behaviour when the L\'{e}vy process $\mathbf{X}$ is isotropic and its (radial) L\'{e}vy-Khintchine exponent $\psi (r)$ is a regularly varying function at infinity with index greater than one.
Let us recall that a L\'{e}vy process $\mathbf{X}$ is isotropic if the measure 
$p_t(\ud x)$ is radial (rotationally invariant) for each $t > 0$, which is equivalent to saying that the matrix $A=\lambda I$ for some $\lambda \geq0$, the L\'{e}vy measure $\nu$ is rotationally invariant and $\gamma =0$. For isotropic processes the L\'{e}vy-Khintchine exponent has the specific form
\begin{align*}
\psi (x) = \int_{\RR^d}\left( 1- \cos \sprod{x}{y}\right)\nu (\ud x) + \lambda \norm{x}^2,
\end{align*}
for some $\lambda \geq 0$. 
We usually abuse notation by setting $\psi(r)$ to be equal to $\psi(x)$ for any $x \in \RR^d$
with $\norm{x} = r>0$. 

By $\psi^-$ we denote the generalized inverse of $\psi^*$.
Using \cite[Theorem 1.5.3]{bgt}, if $\psi \in \RInf$, for some $\alpha>0$, then $\psi ^* \in \RInf$ and thus $\psi ^- \in \mathcal{R}_{1/\alpha}$, which implies that $\lim_{t\to 0}\psi^-(1/t) = \infty$.

The precise constant in the below formula is found by an application of a variant of \cite[Eq. (25.6)]{Sato}.

\begin{corollary}\label{Thm_alpha>1}
Let $\beta \in [1,2)$ be fixed. Let $\mathbf{X}$ be an isotropic L\'{e}vy process in $\RR^d$ with
the L\'{e}vy-Khintchine exponent $\psi$ such that $\psi \in \RInf$, $\alpha \in (\beta ,2]$. Let $g$ be a bounded function and $\mu$ a finite Borel measure. Assume that $r =g\ast \check{\mu}$ satisfies the assumptions of Theorem \ref{Multivariate_Reg_Var_thm}.
Then
\begin{align*}
\lim_{t\to 0^+}\, [\psi ^-(1/t)]^\beta \left( H_g^\mu(t) -H_g^\mu (0) \right) = 
\pi^{-d/2}4^{\beta /2-1}\Gamma \left( \frac{d+\beta}{2}\right) \frac{\Gamma \left( 1-\frac{\beta}{\alpha} \right)  }{\Gamma \left( 1-\frac{\beta}{2}\right)}
\int_{\mathbb{S}^{d-1}}R_\beta (\theta)\sigma (\ud \theta) .
\end{align*}
\end{corollary}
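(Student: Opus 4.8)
The plan is to deduce Corollary~\ref{Thm_alpha>1} from Theorem~\ref{Multivariate_Reg_Var_thm} by specializing to the isotropic case and then explicitly evaluating the integral $\tfrac12\int_{\RR^d} R_\beta(x/\norm{x})\norm{x}^\beta p_\Lambda(x)\,\ud x$. First I would check that the hypotheses of Theorem~\ref{Multivariate_Reg_Var_thm} are indeed met: for an isotropic process the characteristic exponent $\psi$ is radial, so $\psi^*(u)=\psi(u)$ (it is already non-decreasing after the usual reformulation), hence \eqref{PsiStar} holds trivially, and $\psi^-$ coincides with $V^-$ when we take $V=\psi$. The multivariate regular variation condition \eqref{Mult_Reg_Cond} holds with the constant function $\Lambda(\theta)\equiv 1$ after normalizing $V$, since $\psi(s\theta)=\psi(s)$ and $\psi\in\RInf$ gives $\psi(s)/\psi(s)\to 1$; more precisely one chooses $V(s)=\psi(s)$ so that $\Lambda\equiv 1$. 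Then Theorem~\ref{Multivariate_Reg_Var_thm} applies verbatim and yields the limit $[\psi^-(1/t)]^\beta(H_g^\mu(t)-H_g^\mu(0))\to \tfrac12\int_{\RR^d}R_\beta(x/\norm{x})\norm{x}^\beta p_1(x)\,\ud x$, where by \eqref{Lambda_eq} the density $p_1$ satisfies $e^{-\norm{x}^\alpha}=\widehat{p_1}(x)$; that is, $p_1$ is the density of the isotropic $\alpha$-stable law.

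The core computation is then the evaluation of $\tfrac12\int_{\RR^d}R_\beta(x/\norm{x})\norm{x}^\beta p_1(x)\,\ud x$ in polar coordinates. Writing $x=\rho\theta$ with $\rho>0$, $\theta\in\mathbb{S}^{d-1}$, the integral factorizes as $\tfrac12\left(\int_{\mathbb{S}^{d-1}}R_\beta(\theta)\,\sigma(\ud\theta)\right)\left(\int_0^\infty \rho^{\beta}\,\rho^{d-1}p_1(\rho)\,\ud\rho\right)$, using that $p_1$ is radial; here I write $p_1(\rho)$ for the common radial value. The radial integral is, up to the surface-area normalization, a moment of the isotropic $\alpha$-stable density: precisely $\int_{\RR^d}\norm{x}^\beta p_1(x)\,\ud x = \EE\norm{Y}^\beta$ where $Y$ has characteristic function $e^{-\norm{x}^\alpha}$. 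Since $\omega_{d-1}:=\sigma(\mathbb{S}^{d-1})=2\pi^{d/2}/\Gamma(d/2)$, we get $\int_0^\infty\rho^{\beta+d-1}p_1(\rho)\,\ud\rho = \EE\norm{Y}^\beta/\omega_{d-1}$, so the target constant becomes $\tfrac12\,\dfrac{\EE\norm{Y}^\beta}{\omega_{d-1}}\int_{\mathbb{S}^{d-1}}R_\beta(\theta)\,\sigma(\ud\theta)$.

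It remains to identify $\EE\norm{Y}^\beta$ with $2^{\beta}\,\Gamma\!\big(\tfrac{d+\beta}{2}\big)\Gamma\!\big(1-\tfrac{\beta}{\alpha}\big)\big/\big(\Gamma\!\big(\tfrac{d}{2}\big)\Gamma\!\big(1-\tfrac{\beta}{2}\big)\big)$, which is exactly the "variant of \cite[Eq.~(25.6)]{Sato}" the authors allude to — the fractional-moment formula for isotropic stable vectors. I would derive it using $\norm{y}^\beta = c_{d,\beta}\int_{\RR^d}\tfrac{1-\cos\sprod{y}{\xi}}{\norm{\xi}^{d+\beta}}\,\ud\xi$ for $0<\beta<2$, apply Fubini against $p_1$, use $\int_{\RR^d}(1-\cos\sprod{y}{\xi})p_1(y)\,\ud y = 1 - e^{-\norm{\xi}^\alpha}$, reduce to $\int_{\RR^d}\tfrac{1-e^{-\norm{\xi}^\alpha}}{\norm{\xi}^{d+\beta}}\,\ud\xi$ in polar coordinates, and evaluate via the Gamma-integral $\int_0^\infty(1-e^{-u^\alpha})u^{-1-\beta}\,\ud u = \tfrac{1}{\beta}\Gamma(1-\beta/\alpha)$; combining with the explicit value of $c_{d,\beta}$ (which itself carries the $\Gamma((d+\beta)/2)$, $\Gamma(1-\beta/2)$, $\Gamma(d/2)$ and power-of-$2$ factors) collapses everything to the stated constant after cancelling $\omega_{d-1}$ and the factor $\tfrac12$. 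The main obstacle is purely bookkeeping: correctly tracking the normalizing constant $c_{d,\beta}$ in the Fourier representation of $\norm{y}^\beta$ and combining the several Gamma factors without sign or power-of-two slips; the analytic content (Fubini, polar coordinates, the elementary Gamma integral, and the appeal to Theorem~\ref{Multivariate_Reg_Var_thm}) is routine.
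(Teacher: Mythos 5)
Your proposal is correct and matches the paper's (unwritten) argument: the authors simply invoke Theorem \ref{Multivariate_Reg_Var_thm} with $V=\psi$ and $\Lambda\equiv 1$, and remark that the constant comes from a variant of Sato's Eq.\ (25.6), i.e.\ the fractional moment $\EE\norm{Y}^{\beta}=2^{\beta}\Gamma\!\left(\tfrac{d+\beta}{2}\right)\Gamma\!\left(1-\tfrac{\beta}{\alpha}\right)\big/\big(\Gamma\!\left(\tfrac{d}{2}\right)\Gamma\!\left(1-\tfrac{\beta}{2}\right)\big)$ of the isotropic $\alpha$-stable law, which you derive and combine with $\omega_{d-1}=2\pi^{d/2}/\Gamma(d/2)$ exactly as needed to produce $\pi^{-d/2}4^{\beta/2-1}\Gamma\!\left(\tfrac{d+\beta}{2}\right)\Gamma\!\left(1-\tfrac{\beta}{\alpha}\right)/\Gamma\!\left(1-\tfrac{\beta}{2}\right)$. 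The only small imprecision is your claim that $\psi^{*}=\psi$ because $\psi$ is ``already non-decreasing'': a radial characteristic exponent need not be monotone, but \eqref{PsiStar} still holds since $\psi\in\RInf$ with $\alpha>0$ gives $\psi\sim\psi^{*}$ at infinity by \cite[Theorem 1.5.3]{bgt}, so the corollary follows all the same.
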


The next theorem treats about the assumption on the L\'{e}vy measure, that is we require it is regularly varying according to the presentation by Resnick \cite{Resnick}.
\begin{theorem}\label{Thm_Levy_measure}
Let $\beta \in [1,2)$ be fixed. Let $\mathbf{X}$ be a symmetric L\'{e}vy process in $\RR^d$ with 
the triplet $(0,0 ,\nu )$. Suppose that there is a measure $\eta$ on $\RR^d\setminus\{0\}$ such that
\begin{align}\label{Mult_Reg_Cond_Levy_meas}
\lim_{s\to 0^+} \frac{\nu (sG)}{\nu \left( B_s^{c} \right) } = \eta (G),\quad \mathrm{for\ }
G\subset \RR^d\setminus\{0\}\ \mathrm{with}\ \eta (\partial G)=0,
\end{align}
where $V(t) = \nu (B_{1/t}^c)$ is regularly varying at infinity of index $\alpha \in (\beta ,2)$.
Let $g$ be a bounded function, $\mu$ a finite Borel measure and $r =g\ast \check{\mu}$. Assume that there is a real function $R_\beta$ defined on the sphere $\mathbb{S}^{d-1}$ such that
\begin{align*}
\lim_{t\to 0^+}\sup_{\theta \in \mathbb{S}^{d-1}}\Big\vert \frac{r(t\theta)+r(-t\theta)-2r(0)}{t^\beta}-R_\beta(\theta)\Big\vert = 0.
\end{align*}
Moreover, suppose that $r$ satisfies \eqref{beta_r_cond}.
Then
\begin{align*}
\lim_{t\to 0^+} [V^-(1/t)]^\beta \left( H_g^\mu(t) -H_g^\mu (0) \right) = 
\frac{1}{2}\int_{\RR^d} R_\beta\left( x/\norm{x} \right) \norm{x}^\beta p_{\eta}(x)\ud x ,
\end{align*}
where the density function $p_{\eta}$ is uniquely determined by the formula
\begin{align*}
e^{-\int_{\RR^d} \left( 1-\cos \sprod{\xi}{y}\right) \eta (\ud y)} = \int_{\RR^d} e^{i\sprod{\xi}{x}}p_{\eta}(x)\ud x .
\end{align*}
\end{theorem}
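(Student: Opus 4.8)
The plan is to follow the scheme of the proof of Theorem~\ref{Multivariate_Reg_Var_thm}, with the regular variation of the L\'evy measure now playing the role of that of $\psi$. By \eqref{Generator_formula} we have $H_g^\mu(t)=T_tr(0)=\int_{\RR^d}r(y)\,p_t(\ud y)$ and $H_g^\mu(0)=r(0)$, so, writing $q(y):=r(y)+r(-y)-2r(0)$ and $a(t):=V^-(1/t)$ and using the symmetry of $p_t$,
\[
a(t)^\beta\big(H_g^\mu(t)-H_g^\mu(0)\big)=\frac12\int_{\RR^d}a(t)^\beta q\big(x/a(t)\big)\,\tilde p_t(\ud x),
\]
where $\tilde p_t$ denotes the law of $a(t)X_t$. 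Since $V\in\calR_\alpha$ with $\alpha>0$ we have $V^-\in\calR_{1/\alpha}$, so $a(t)\to\infty$ as $t\to0^+$; combined with the assumed radial convergence of $q(s\theta)/s^\beta$ and with \eqref{beta_r_cond}, this gives, for every $x\neq0$,
\[
a(t)^\beta q\big(x/a(t)\big)=\norm{x}^\beta\cdot\frac{q\big((\norm{x}/a(t))\,x/\norm{x}\big)}{(\norm{x}/a(t))^\beta}\ \longrightarrow\ \norm{x}^\beta R_\beta\big(x/\norm{x}\big)\qquad(t\to0^+),
\]
with the uniform domination $|a(t)^\beta q(x/a(t))|\le L\norm{x}^\beta$ and with the convergence uniform on each annulus $\{\delta\le\norm{x}\le M\}$.

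Next I would discard the region away from the origin. Since $r$ (hence $q$) is bounded, a Pruitt-type tail estimate $\PP(\norm{X_t}\ge\delta)\le C_\delta t$ gives, for each fixed $\delta>0$,
\[
\Big|a(t)^\beta\int_{\norm{y}\ge\delta}q(y)\,p_t(\ud y)\Big|\le C_\delta\,\norm{q}_\infty\, a(t)^\beta t\ \longrightarrow\ 0\qquad(t\to0^+),
\]
because $\beta<\alpha$ forces $a(t)^\beta t\to0$ (a consequence of $V^-\in\calR_{1/\alpha}$). For the remaining part, rescaling gives $a(t)^\beta\int_{\norm{y}<\delta}q(y)\,p_t(\ud y)=\int_{\norm{x}<\delta a(t)}a(t)^\beta q(x/a(t))\,\tilde p_t(\ud x)$, and now only the behaviour of $\nu$ near $0$ enters. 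I would then (i) prove $\tilde p_t\Rightarrow p_\eta(x)\,\ud x$ weakly as $t\to0^+$; (ii) establish the uniform bound $\limsup_{t\to0^+}\int_{M\le\norm{x}<\delta a(t)}\norm{x}^\beta\,\tilde p_t(\ud x)\to0$ as $M\to\infty$; and combine (i)--(ii) with the dominated, annulus-uniform convergence of $a(t)^\beta q(x/a(t))$ above, splitting $\{\norm{x}<\delta a(t)\}$ into $B_\delta$, $\{\delta\le\norm{x}\le M\}$ and the complement and letting $t\to0^+$, then $M\to\infty$, then $\delta\to0^+$. This produces the asserted limit $\tfrac12\int_{\RR^d}R_\beta(x/\norm{x})\norm{x}^\beta p_\eta(x)\,\ud x$.

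For (i): since $\int e^{i\sprod{\xi}{x}}\tilde p_t(\ud x)=e^{-t\psi(a(t)\xi)}$ and, by the symmetric form of $\psi$ and the substitution $y\mapsto a(t)^{-1}y$,
\[
t\psi(a(t)\xi)=t\,V(a(t))\int_{\RR^d}\big(1-\cos\sprod{\xi}{z}\big)\,\widehat\nu_t(\ud z),
\]
with $\widehat\nu_t$ the image of $\nu$ under $y\mapsto a(t)y$ normalised by $V(a(t))=\nu(B_{1/a(t)}^c)$, it suffices to check $t\,V(a(t))\to1$ (the standard fact $V(V^-(1/t))\sim1/t$ for $V\in\calR_\alpha$) and $\int(1-\cos\sprod{\xi}{z})\,\widehat\nu_t(\ud z)\to\int(1-\cos\sprod{\xi}{z})\,\eta(\ud z)$. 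The latter follows from the vague convergence $\widehat\nu_t\to\eta$ on $\RR^d\setminus\{0\}$ supplied by \eqref{Mult_Reg_Cond_Levy_meas}, once the contributions of a small ball about $0$ and of the exterior of a large ball are shown to be small uniformly in $t$: near $0$ one uses $1-\cos\sprod{\xi}{z}\le\tfrac12\norm{\xi}^2\norm{z}^2$ and controls $\int_{\norm{z}\le\varepsilon}\norm{z}^2\,\widehat\nu_t(\ud z)$ by Karamata's theorem and the Potter bounds \eqref{eq:14} (here $\alpha<2$ is used), while for large $\norm{z}$ one has $\widehat\nu_t(B_R^c)=V(a(t)/R)/V(a(t))\to R^{-\alpha}$ (here $\alpha>0$ is used). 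The homogeneity $\eta(s\cdot)=s^{-\alpha}\eta(\cdot)$ forced by $V\in\calR_\alpha$ ensures $\int(1\wedge\norm{z}^2)\,\eta(\ud z)<\infty$, so the limiting exponent is that of the symmetric $\alpha$-stable-type law whose density $p_\eta$ is characterised by the stated Fourier identity, and (i) follows. Claim (ii) rests on the same machinery: a Pruitt-type bound gives $p_t(B_u^c)\le C\,t\,V(1/u)$ for $u\le\delta$, and after rescaling an application of \eqref{eq:14} with a Potter parameter in $(0,\alpha-\beta)$ bounds the integral in (ii) by $C\,t\,V(a(t))$ times a quantity that vanishes as $M\to\infty$.

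I expect the main obstacle to be precisely parts (i) and (ii): converting the qualitative hypotheses \eqref{Mult_Reg_Cond_Levy_meas} and $V\in\calR_\alpha$ into estimates that are uniform as $t\to0^+$. This is exactly where the range $\alpha\in(\beta,2)$ is needed --- $\alpha<2$ to tame the rescaled small jumps, $\alpha>\beta$ to tame the rescaled tails and the $\beta$-th moment --- and structurally the argument parallels the proof of Theorem~\ref{Multivariate_Reg_Var_thm} and the computations in \cite{CG}.
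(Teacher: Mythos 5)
Your proposal is correct and follows essentially the same route as the paper: both reduce to the rescaled laws $\widetilde{p}_t$ of $V^-(1/t)X_t$, prove their weak convergence to the stable-type law $p_\eta$ by showing $t\psi(V^-(1/t)\xi)\to\int(1-\cos\sprod{\xi}{y})\eta(\ud y)$ (with Potter bounds taming the rescaled tails and a Karamata/Pruitt argument, via $h(s)\asymp V(1/s)$, taming the rescaled small jumps), control the $\beta$-moment tails exactly as in the proof of Theorem \ref{Multivariate_Reg_Var_thm}, and conclude by dominated convergence using \eqref{beta_r_cond} together with the uniform convergence of the second difference quotient. The only difference is cosmetic: the paper organizes the cutoffs with a single smooth $\chi_\varepsilon$ in the rescaled variable, whereas you split at a fixed $\delta$ and then into annuli.
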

It is worth pointing out that Theorem \ref{Thm_Levy_measure} (as well as Theorem \ref{Multivariate_Reg_Var_thm}) applies in the case when $\mathbf{X}$ is the stable process in $\RR^d$.
We also emphasize that the support of the measure $\eta$ in \eqref{Mult_Reg_Cond_Levy_meas} may be contained in some hyperplane of $\mathbb{R}^d$, see Example \ref{Ex:5}.
Further, condition \eqref{Mult_Reg_Cond_Levy_meas} forces the scaling property of the measure $\eta$, that is there exists some $\alpha \geq 0$ such that $\eta (tG) = t^{-\alpha}\eta (G)$, for all $t>0$ and sets $G$ with $\eta (\partial G)=0$. This in turn implies that $\eta$ in Theorem \ref{Thm_Levy_measure} is the L\'{e}vy measure of the $\alpha$-stable law.

In the rest of the paper we first present a list of examples and concluding Section \ref{sec_Proofs} is devoted to the proofs of the aforementioned results.
%%%%%%%%%%%%%%%%%%%%%

\section{Examples}
Let $\Omega$ be a non-empty open subset of $\RR ^d$ such that its Lebesgue measure $|\Omega|$ is finite.  

\begin{example}
Let $\mathbf{X}$ be a L\'{e}vy process with finite variation and $\mathcal{L}^0$ be the generator of the process $X_t^0 =X_t + t\gamma_0$. Let $g(x) = \IndFun{\Omega}(x)$ and $\mu (\ud x) = f(x)\ud x$, where $f\in C_0^1(\RR^d)$ with $\nabla f$ bounded. In particular, $f$ is Lipschitz and $\lim_{\norm{x}\to \infty}f(x)=0$. In this case $r(x) = g\ast \check{f}(x)$ and it is Lipschitz with $\lim_{\norm{x}\to \infty}r(x)=0$, belongs to $\mathrm{Dom}(\mathcal{L}^0)$ and $\nabla r = g\ast \nabla \check{f}$. Moreover, we claim that
$\mathcal{L}^0 r(0) = \int_{\Omega}\mathcal{L}^0 f(x)\ud x$.
Indeed, applying \cite[Lemma 2]{CG} we obtain that
\begin{align*}
\mathcal{L}^0 r(0) &= \int_{\RR^d}\left( r (y) - r (0)\right) \nu (\ud y) = 
\int_{\RR^d}\nu (\ud y)\int_{\RR^d}  f(-x)\left( \IndFun{\Omega}(y-x) - \IndFun{\Omega}(-x)\right) \ud x \\
&= \int_{\RR^d}\nu (\ud y)\int_{\Omega}\left(f(x+y)-f(x)\right)\ud x = 
\int_{\Omega} \int_{\RR^d} \left(f(x+y)-f(x)\right)\nu (\ud y) \ud x \\
&=\int_{\Omega}\mathcal{L}^0 f(x)\ud x.
\end{align*} 
Hence, by Theorem \ref{Thm_X_bdd_variation},
\begin{align*}
\lim_{t\to 0^+}t^{-1}\left(H_g^\mu (t) -H_g^\mu  (0)\right)
 = \int_{\Omega}\mathcal{L}^0 f(x)\ud x
  + \norm{\gamma_0} \nabla_{\frac{\gamma_0}{\norm{\gamma _0}}} r(0) \IndFun{\RR^d\setminus \{0\}}(\gamma _0).
\end{align*}
\end{example}

\begin{example}
Let $\mathbf{X}$ be a L\'{e}vy process in $\RR^d$. Let $g(x) = \IndFun{\Omega}(x)$ and $\mu (\ud x) = \IndFun{\Omega_0}\ud x$, for some $\Omega_0\subset \RR^d$ with $|\Omega_0|<\infty$. We have $r(x) = |\Omega \cap (\Omega_0 +x)|$ and it is bounded, uniformly continuous and vanishes at infinity. We consider two cases:\\
\textit{Case 1.} Let $\Omega \cap \Omega_0 =\emptyset$ with $\mathrm{dist}(\Omega , \Omega_0)=D>0$. Then $r(x) = 0$, for $\norm{x}< D$ and, applying \cite[Corollary 8.9]{Sato}, we obtain 
\begin{align*}
t^{-1}H_g^\mu(t) = t^{-1}\int_{\RR^d} r(x)p_t(\ud x)\longrightarrow \int_{\Omega}\nu \left(y- \Omega_0\right) \ud y ,\quad \mathrm{as}\ t\to 0^+.
\end{align*}
\textit{Case 2.} If $\Omega \subset \Omega _0$ with $\mathrm{dist}(\Omega ,\Omega_0^c)>0$, we similarly get that
\begin{align*}
t^{-1}(H_g^\mu (t) - H_g^\mu (0)) =t^{-1}\int_{\RR^d} (r(x) - r(0))p_t(\ud x)\longrightarrow -\int_{\Omega} \nu (y-\Omega_0^c)\, \ud y,\quad \mathrm{as}\ t\to 0^+. 
\end{align*}
\end{example}

\begin{example}
Let $\mathbf{X}$ be a L\'{e}vy process in $\RR^d$. Let $\mu(\ud x) =f(x)\ud x$ with the function $f(x) = (2 \pi)^{-d/2}e^{-\norm{x}^2/2}$ and suppose that $g\in L^\infty (\RR^d)$. Then $\check{f}=f$ and $g\ast f \in C_0^\infty (\RR^d)$, and whence it also belongs to $\mathrm{Dom}(\mathcal{L})$. Since $\nabla f$ is bounded, we deduce that $f$ is Lipschitz. By \eqref{Generator_formula} we obtain that
\begin{align*}
\lim_{t\to 0^+} t^{-1} \left( H_g^\mu (t) -\int_{\RR^d} f(x)g(x)\ud x \right) = 
\mathcal{L}(g\ast f)(0) .
\end{align*}
Similarly we can apply Theorem \ref{Thm_X_bdd_variation}.
\end{example}

\begin{example}
Let $S^{(\alpha)}$ be the $\alpha$-stable process in $\RR^d$ with $0<\alpha <2$ and with the L\'{e}vy measure $\nu ^{(\alpha )}$ given by the formula
\begin{align*}
\nu ^{(\alpha)}(B) = \int_{\mathbb{S}^{d-1}}\!\!\! m^{(\alpha)}(\ud \theta ) \int_0^\infty \IndFun{B}(r\theta )\frac{\ud r}{r^{1+\alpha}},\quad \mathrm{for}\ B\in \mathcal{B}(\RR),
\end{align*}
where $m^{(\alpha)}$ is a finite measure on the sphere $\mathbb{S}^{d-1}$, cf. \cite[Theorem 14.3]{Sato}. 
We abuse notation and we write just $m$ for the measure $m^{(\alpha)}$.
We additionally assume that there is no hyperplane $\mathcal{V}$ of $\mathbb{R}^d$
such that $m$ is supported in $\mathcal{V}$. The corresponding L\'{e}vy-Khintchine exponent $\psi^{(\alpha)}$ takes the form
\begin{align*}
\psi^{(\alpha)} (x) = \int_{\mathbb{S}^{d-1}} \int_0^\infty \left( 1- \cos \sprod{x}{r\theta} \right) \frac{\ud r}{r^{1+\alpha}} m(\ud \theta ).
\end{align*}
Consider a symmetric L\'{e}vy process $\mathbf{X}$ of which the L\'{e}vy-Khintchine exponent $\psi$ equals
\begin{align*}
\psi (x) = \int_{\mathbb{S}^{d-1}} \int_0^\infty \left( 1- \cos \sprod{x}{r\theta} \right) \frac{f(1/r)}{r}\, \ud r\,  m(\ud \theta ),
\end{align*}
for a given function $f\in \RInf$. The corresponding L\'{e}vy measure is 
\begin{align*}
\nu_{f} (B) = \int_{\mathbb{S}^{d-1}}\!\!\! m(\ud \theta ) \int_0^\infty \IndFun{B}(r\theta )\frac{f(1/r) }{r}\, \ud r,\quad \mathrm{for}\ B\in \mathcal{B}(\RR).
\end{align*}
It follows, cf. \cite[Remark 14.4]{Sato}, that for any non-negative and measurable function $F$ we have
\begin{align}
\int_{\RR^d} F(x)\nu_f (\ud x) = \int_{\mathbb{S}^{d-1}} \int_0^\infty F(r\theta) \frac{f(1/r) }{r}\, \ud r\,  m(\ud \theta ) .\label{FormulaHelp}
\end{align} 

Our aim is to apply Theorem \ref{Multivariate_Reg_Var_thm} to the process $\mathbf{X}$. For that reason we need to verify condition \eqref{PsiStar} for the function $\psi$.
We first show that there are some $R>0$ and positive constants $c=c(f),C=C(f)$ such that for all $s\geq R $ we have
\begin{align}
cf(s)\leq \psi (s\theta_0)\leq Cf(s),\quad \mathrm{for\ each}\ \theta_0\in \mathbb{S}^{d-1}.\label{eq_*}
\end{align}
%and the constants $c$ and $C$ do not depend on $\theta_0$. 
Indeed, for the upper bound, by a suitable change of variable we write
\begin{align*}
\frac{\psi(s\theta_0)}{f(s)} =  \int_{\mathbb{S}^{d-1}} \int_0^\infty \left( 1- \cos \big( \rho \sprod{\theta_0}{\theta}\big) \right) \frac{f(s/\rho)}{f(s)}\frac{\ud \rho}{\rho}\,  m(\ud \theta ).
\end{align*}
By \eqref{eq:14}, for any $\varepsilon >0$ there is $R>0$ such that
\begin{align*}
\frac{f(s/\rho)}{f(s)} \leq 2 \max \{ (1/\rho)^{\alpha +\varepsilon}, (1/\rho)^{\alpha -\varepsilon} \}, \qquad s\geq \rho R,\ s\geq R.
\end{align*}
Thus we get for $s\geq R$
\begin{multline*}
\frac{\psi(s\theta_0)}{f(s)} \leq 2\int_{\mathbb{S}^{d-1}} \int_0^1 \left( 1- \cos \big( \rho \sprod{\theta_0}{\theta}\big) \right) \frac{\ud \rho}{\rho^{\alpha +\varepsilon +1}}\,  m(\ud \theta ) \\
+
2\int_{\mathbb{S}^{d-1}}\int_1^{s/R} \left( 1- \cos \big( \rho \sprod{\theta_0}{\theta}\big) \right) \frac{\ud \rho}{\rho^{\alpha -\varepsilon +1}}\,  m(\ud \theta ) \\
+ \int_{\mathbb{S}^{d-1}} \int_{s/R}^\infty \left( 1- \cos \big( \rho \sprod{\theta_0}{\theta}\big) \right) \frac{f(s/\rho)}{f(s)}\frac{\ud \rho}{\rho}\,  m(\ud \theta ).
\end{multline*}
Since $1- \cos \big( \rho \sprod{\theta_0}{\theta}\big)\leq 2\min \{ 1, \rho^2 \}$, we can estimate the two first integrals by the quantity $Km(\mathbb{S}^{d-1})$, for some $K>0$. 
To the last integral we use formula \eqref{FormulaHelp} which yields
\begin{align*}
\int_{\mathbb{S}^{d-1}} \int_{s/R}^\infty \left( 1- \cos \big( \rho \sprod{\theta_0}{\theta}\big) \right) &\frac{f(s/\rho)}{f(s)}\frac{\ud \rho}{\rho}\,  m(\ud \theta )\\
&= \frac{1}{f(s)}\int_{\norm{x}> \frac{1}{R}} \left( 1- \cos \sprod{\theta_0}{x} \right) \nu_f(\ud x)
\leq \frac{2}{f(s)}\nu_f(B_{1/R}^c).
\end{align*}
As $f$ diverges to infinity, the upper bound independent of $\theta_0$ is found. For the lower bound we use again Potter bounds \eqref{eq:14}. For $\varepsilon >0$ there is $R>0$ such that
\begin{align*}
\frac{f(s/\rho)}{f(s)} \geq \frac{1}{2} \min \{ (1/\rho)^{\alpha +\varepsilon}, (1/\rho)^{\alpha -\varepsilon} \}, \qquad s\geq \rho R,\ s\geq R.
\end{align*}
Taking $\varepsilon = \alpha /2$ we obtain
 \begin{align*}
\frac{\psi(s\theta_0)}{f(s)} \geq  \frac{1}{2} \int_{\mathbb{S}^{d-1}} \int_0^1 \left( 1- \cos \big( \rho \sprod{\theta_0}{\theta}\big) \right) \frac{\ud \rho}{\rho^{1+ \frac{\alpha}{2}}}\,  m(\ud \theta ) = \frac{1}{2} \widetilde{\psi}^{(\alpha /2)}(\theta_0),
\end{align*}
where $\widetilde{\psi}^{(\alpha /2)}$ is the L\'{e}vy-Khintchine exponent of the pure-jump truncated $(\alpha /2)$-stable process with the L\'{e}vy measure $\nu ^{(\alpha /2)}|_{B_1}$. In particular, the function $\widetilde{\psi}^{(\alpha /2)}$ is continuous. We finally observe that $\inf _{\theta \in \mathbb{S}^{d-1}}\widetilde{\psi}^{(\alpha /2)}(\theta)>0$. Indeed, suppose \textit{a contrario} that this infimum is zero. Then there must exist $\theta_1\in \mathbb{S}^{d-1}$ such that $\widetilde{\psi}^{(\alpha /2)}(\theta_1)=0$. But then the support of the measure $m$ is contained in the subspace $\{ \lambda \theta_1 :\, \lambda \in \mathbb{R}\}^{\bot}$. This contradicts our assumption and hence we have also found the lower bound in \eqref{eq_*}.

Since $f\in \RInf$ with $\alpha >0$, we have $\sup_{R\leq r\leq s}f(r)\asymp f(s)$, cf. \cite[Theorem 1.5.3]{bgt}.
With the aid of \eqref{eq_*} we conclude that $\sup_{\theta \in \mathbb{S}^{d-1} ,\, R\leq r\leq s}\psi(r\theta_0)\asymp f(s)$ and thus condition \eqref{PsiStar} is satisfied for all $\norm{x}\geq R$. But clearly continuity of $\psi$ allows us to deduce the result also for $1\leq \norm{x}\leq R$ and the proof of \eqref{PsiStar} is finished.

We next observe that 
\begin{align*}
\lim_{s\to \infty}\frac{\psi (s\theta)}{f(s)} = \psi^{(\alpha)}(\theta),\quad \theta \in \mathbb{S}^{d-1}.
\end{align*}
This follows by an application of the Lebesgue dominated convergence theorem which is justified by an analogous argument to that one used for the upper bound in \eqref{eq_*}.

Finally, let $\mu $ be a finite measure and $g$ be a bounded function. Assume that the function $r = g\ast \check{\mu}$ satisfies all the conditions of Theorem \ref{Multivariate_Reg_Var_thm}. We obtain
\begin{align*}
\lim_{t\to 0^+} [f^-(1/t)]^\beta \left( H_g^\mu(t) -H_g^\mu (0) \right) = 
\frac{1}{2}\int_{\RR^d} R_\beta\left( x/ \norm{x} \right) \norm{x}^\beta p^{(\alpha)}(x) \ud x ,
\end{align*}
where the density function $p^{(\alpha)}(x)$ is uniquely determined by
$e^{-\psi^{(\alpha)} \left(x \right)} = \int_{\RR^d} e^{i\sprod{x}{y}}p^{(\alpha)} (y)\ud y$.
\end{example}

\begin{example}\label{Ex:5}
Let $\mathbf{X} = (S^{(\alpha )}, S^{(\rho )})$, where $S^{(\alpha )}$ and $S^{(\rho )}$ are two independent symmetric stable processes in $\RR$ with indexes $\alpha$ and $\rho$ respectively and such that $0<\alpha <\rho <2$. The L\'{e}vy measure $\nu$ of $\mathbf{X}$ is supported on axes $OX$ and $OY$. Condition \eqref{Mult_Reg_Cond_Levy_meas} forces that the same holds for the limit measure $\eta$. Moreover, since in this case $V(t) = \nu (B_{1/t}^c)$ belongs to $\mathcal{R}_\rho$, we conclude that $\eta (OX) =0$ and thus $\eta$ is the symmetric $\rho$-stable measure supported on $OY$. 

Let $\beta =1 $, $1<\rho<2$ and set $g=\IndFun{\Omega}$, $\ud \mu  =\IndFun{\Omega}\ud x $, for a radial set $\Omega$. We shall apply Theorem \ref{Thm_Levy_measure}.
With this choice we have that $r(x) = g\ast \check{\mu }(x) = |\Omega \cap (\Omega +x)|$ is the so-called covariance function of the set $\Omega$ for which $\lim_{t\to 0^+}t^{-1}(r(0)-r(t\theta))=V_\theta (\Omega)/2$, where $V_\theta(\Omega)$ is the directional derivative of $\IndFun{\Omega}$ in the direction $\theta \in \mathbb{S}^1$.
For sets of finite perimeter the following relation holds
\begin{align}\label{Per}
\mathrm{Per}(\Omega) = \frac{\Gamma \left( \frac{d+1}{2}\right)}{\pi ^{(d-1)/2}}\int_{\mathbb{S}^{d-1}}V_\theta (\Omega)\sigma (\ud \theta),\quad \Omega \subset \RR^d.
\end{align}
For all of this we refer the reader to \cite[Subsection 2.1]{CG}. In particular, see eg. \cite[Eq. (4)]{CG} for the precise definition of the perimeter $\mathrm{Per}(\Omega)$ of the set $\Omega$, cf. also \cite{Ambrosio_2000} and\cite{Galerne}.

By our choice of the function $g$ and the measure $\mu$, we have $H_g^\mu (0) - H_g^\mu (t) = |\Omega|-H_\Omega (t)$, where $H_\Omega (t)$ is the heat content defined at \eqref{Heat_Content}.
Moreover, since for radial sets $V_\theta (\Omega)$ is constant, setting $e_2=(0,1)$ we obtain that 
\begin{align*}
\lim_{t\to 0^+}V^{-}(1/t) \left( |\Omega| - H_{\Omega} (t)\right)  = \frac{V_{e_{2}}(\Omega)}{2}\int_0^\infty xp_\eta ^{(\rho)} (x)\ud x = \pi ^{-2}\Gamma \left( 1-\frac{1}{\rho}\right) \mathrm{Per}(\Omega),
\end{align*}
where for the last equality we used \eqref{Per} together with the formula for the expectation of the stable random variable, cf. \cite[Eq. (25.6)]{Sato}.
We mention that for non-radial sets the last equality in the above formula is not valid. Indeed, if we take $\Omega$ to be the rectangle centered at $(0,0)$ and with sides of length $0<a<b$, then one easily computes that $V_{e_2}(\Omega) =V_{-e_2}(\Omega)=4a$ and whence 
\begin{align*}
\lim_{t\to 0^+}V^{-}(1/t)\left( |\Omega| - H_{\Omega} (t)\right) = \frac{V_{e_{2}}(\Omega)+V_{-e_{2}}(\Omega)}{4}\int_0^\infty xp_\eta ^{(\rho)}(x)\ud x = 2\pi^{-1} \Gamma \left( 1-\frac{1}{\rho}\right) a .
\end{align*}
\end{example}

\section{Proofs}\label{sec_Proofs}
We start with an auxiliary lemma which is closely related to the small-time moment behaviour of L\'{e}vy processes studied in \cite{Jacod}, \cite{Lopez} and \cite{Kuhn}. In particular, we extend an admissible class of functions from \cite[Section 5.2]{Jacod} in the case $\beta =1$, and the result \cite[Theorem 3.5]{Kuhn} for L\'{e}vy processes. 

After \cite{Pruitt} we consider the following function related to the L\'{e}vy process $\mathbf{X}$, for any $r>0$,
\begin{align}
\begin{aligned}\label{Pruitt_Function}
h(r)= \norm{A}r^{-2} &+ 
 r^{-1} \Big\lVert \gamma +\int_{\RR ^d} \left(\textbf{1}_{\norm{y} < r}-\textbf{1}_{\norm{y} < 1}\right) y\, \nu(\ud y)\Big\rVert  \\
& +\int_{\RR ^d} \left( 1\wedge \norm{y}^2r^{-2}\right) \, \nu(\ud y),
\end{aligned}
\end{align}
where $(A,\gamma ,\nu)$ is the triplet from \eqref{charact_expo} and $\Vert A\Vert = \max_{\Vert x\Vert =1} \norm{Ax}$.
We shall repeatedly use the estimate \cite[Formula (3.2)]{Pruitt}; there is some positive constant $C=C(d)$ such that for any $r>0$,
\begin{align}\label{estimate_Pruitt}
\PP \left( \norm{X_t}\geq r \right) \leq \PP \left(\sup_{0\leq s\leq t } \norm{X_s}\geq r \right)\leq C t h(r) .
\end{align}
We also recall that for symmetric L\'{e}vy processes, see \cite[Corollary 1]{Grzywny1},
\begin{equation}\label{psi_star_estimate}
	\frac{1}{2} \psi^*(r^{-1}) \leq h(r) \leq 8(1 + 2d) \psi^*(r^{-1}).
\end{equation}

\begin{lemma}\label{Lemma_est}
Let $\mathbf{X}$ be a L\'{e}vy process in $\RR^d$ with the triplet $(0, \gamma,  \nu)$ and such that
\begin{align*}
\int_{\norm{x}<1} \norm{x}^\beta \nu (\ud x) <\infty ,\quad\ 0\leq \beta \leq 2.
\end{align*}
For $\beta \in [0,1]$ we additionally assume that $\gamma_0=0$.
Let $F\in C_b(\RR^d)$ satisfy $|F(x)-F(0)|\leq C\norm{x}^\beta$, for $\norm{x}<1$. Then
\begin{align*}
\lim_{t\to 0^+}t^{-1}\int_{\RR^d} \left( F(x)-F(0)\right) p_t(\ud x) = \int_{\RR^d} \left( F(x)-F(0)\right)\nu (\ud x).
\end{align*}
\end{lemma}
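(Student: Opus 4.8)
The plan is to reduce the statement to the known Pruitt-type tail estimate \eqref{estimate_Pruitt} and a dominated-convergence argument, treating the small-ball and large-ball regions separately. First I would split the integral as
\begin{align*}
t^{-1}\int_{\RR^d}(F(x)-F(0))\,p_t(\ud x) = t^{-1}\int_{\norm{x}<1}(F(x)-F(0))\,p_t(\ud x) + t^{-1}\int_{\norm{x}\geq 1}(F(x)-F(0))\,p_t(\ud x),
\end{align*}
and analyse each piece. For the outer region $\norm{x}\geq 1$, boundedness of $F$ gives $|F(x)-F(0)|\leq 2\norm{F}_\infty$, so the contribution is bounded by $2\norm{F}_\infty\, t^{-1}\PP(\norm{X_t}\geq 1)$. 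One expects the subtlety here: the crude bound $t^{-1}\PP(\norm{X_t}\geq 1)\leq C h(1)$ from \eqref{estimate_Pruitt} is not enough by itself; instead I would use the classical fact that for a L\'evy process $t^{-1}p_t(\cdot)$ converges vaguely to $\nu$ on $\RR^d\setminus\{0\}$ (see \cite[Corollary 8.9]{Sato}), together with tightness of the family $\{t^{-1}p_t|_{\norm{x}\geq \delta}\}$ that follows from \eqref{estimate_Pruitt} since $h(r)\to$ a finite limit and $\int(1\wedge\norm{y}^2)\nu(\ud y)<\infty$. This yields $t^{-1}\int_{\norm{x}\geq 1}(F(x)-F(0))\,p_t(\ud x)\to \int_{\norm{x}\geq 1}(F(x)-F(0))\,\nu(\ud x)$ provided the boundary $\{\norm{x}=1\}$ is $\nu$-null, which can be arranged (or circumvented by choosing a generic radius $R$ close to $1$).

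For the inner region $\norm{x}<1$, I would exploit the growth hypothesis $|F(x)-F(0)|\leq C\norm{x}^\beta$. Fix a small $\delta\in(0,1)$ and split further at $\norm{x}=\delta$. On $\delta\leq\norm{x}<1$ vague convergence again handles the limit. On $\norm{x}<\delta$ we must show the contribution is uniformly small; here the bound is
\begin{align*}
\Big| t^{-1}\int_{\norm{x}<\delta}(F(x)-F(0))\,p_t(\ud x)\Big| \leq C\, t^{-1}\int_{\norm{x}<\delta}\norm{x}^\beta\,p_t(\ud x),
\end{align*}
and I would estimate $t^{-1}\int_{\norm{x}<\delta}\norm{x}^\beta p_t(\ud x)$ by a layer-cake / integration-by-parts argument using \eqref{estimate_Pruitt} on the tails $\PP(\norm{X_t}\geq r)$ for $r<\delta$, producing a bound of the form $C\int_{\norm{x}<\delta}\norm{x}^\beta\,\nu(\ud x) + (\text{lower order})$ uniformly in small $t$. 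Crucially, the hypothesis $\gamma_0=0$ when $\beta\leq 1$ kills the drift term in $h(r)$, making $r^{-1}\|\gamma + \int(\IndFun{\norm{y}<r}-\IndFun{\norm{y}<1})y\,\nu(\ud y)\|$ of order $\int_{\norm{y}<r}\norm{y}\,\nu(\ud y)$ which is $o(1)$-controllable; for $\beta>1$ the finiteness of $\int_{\norm{x}<1}\norm{x}^\beta\nu(\ud x)$ plays the analogous role. Then let $\delta\to 0$: since $\int_{\norm{x}<1}\norm{x}^\beta\nu(\ud x)<\infty$, the bound tends to $0$, which shows the inner contribution converges to $\int_{\norm{x}<1}(F(x)-F(0))\,\nu(\ud x)$.

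Combining the two regions gives the claimed limit $\int_{\RR^d}(F(x)-F(0))\,\nu(\ud x)$, and the right-hand side is finite: near $0$ because of the growth condition and $\int_{\norm{x}<1}\norm{x}^\beta\nu(\ud x)<\infty$, and away from $0$ because $F$ is bounded and $\nu(B_1^c)<\infty$. The main obstacle I anticipate is making the small-ball estimate $t^{-1}\int_{\norm{x}<\delta}\norm{x}^\beta p_t(\ud x)\leq C\int_{\norm{x}<\delta}\norm{x}^\beta\nu(\ud x)+o(1)$ genuinely uniform in $t$ near $0$ while correctly handling the drift compensator inside $h$; this is precisely where the case distinction on $\beta$ and the assumption $\gamma_0=0$ for $\beta\le 1$ enter, and it is the heart of the extension over \cite{Jacod} and \cite{Kuhn}. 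Everything else is bookkeeping with vague convergence of $t^{-1}p_t$ and the Pruitt bound \eqref{estimate_Pruitt}.
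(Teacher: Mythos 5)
Your overall architecture matches the paper's: isolate a small neighbourhood of the origin, treat its complement by the vague convergence $t^{-1}p_t\to\nu$ from \cite[Corollary 8.9]{Sato}, and show the near-origin contribution is negligible (uniformly in small $t$) using $|F(x)-F(0)|\le C\norm{x}^\beta$. The difference lies in how the near-origin moment bound is obtained. The paper applies \cite[Theorem 4.1]{Kuhn} to the test function $\phi_\beta(x)=\chi_\varepsilon(x)\norm{x}^\beta$ and concludes $t^{-1}\int\phi_\beta\,p_t(\ud x)\to\int\phi_\beta\,\nu(\ud x)$, which is small with $\varepsilon$; you instead propose to derive the bound from scratch via the layer-cake formula and Pruitt's estimate \eqref{estimate_Pruitt}.

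That replacement does not close at the critical exponent $\beta=1$ (nor at $\beta=2$), and $\beta=1$ is exactly the case needed in Theorem \ref{Thm_X_bdd_variation}. The layer-cake step gives
\begin{align*}
t^{-1}\int_{\norm{x}<\delta}\norm{x}^\beta\,p_t(\ud x)\;\le\; C\beta\int_0^\delta r^{\beta-1}h(r)\,\ud r,
\end{align*}
and for $\beta=1$ with $\gamma_0=0$ the drift-compensator part of $h$ contributes
\begin{align*}
\int_0^\delta r^{-1}\Big\lVert\int_{\norm{y}<r}y\,\nu(\ud y)\Big\rVert\,\ud r,
\end{align*}
which, in the absence of cancellation, equals $\int_{\norm{y}<\delta}\norm{y}\log(\delta/\norm{y})\,\nu(\ud y)$ and can be infinite even though $\int_{\norm{y}<1}\norm{y}\,\nu(\ud y)<\infty$: take $d=1$ and $\nu(\ud y)=\IndFun{(0,1/2)}(y)\,y^{-2}(\log(1/y))^{-2}\,\ud y$. (The term $r^{-2}\int_{B_r}\norm{y}^2\nu(\ud y)$ produces the same logarithmic loss when $\beta=2$.) So the intermediate bound you assert, $C\int_{\norm{x}<\delta}\norm{x}^\beta\nu(\ud x)+o(1)$ uniformly in small $t$, is not a consequence of \eqref{estimate_Pruitt}: the tail estimate is too lossy at the critical exponents, and the correct statement $t^{-1}\int_{\norm{x}<\delta}\norm{x}\,p_t(\ud x)\le C\big(\int_{\norm{y}<\delta}\norm{y}\,\nu(\ud y)+\delta\,\nu(B_\delta^c)\big)$ must come from a genuine first-moment argument (a compensation-formula computation, or the cited \cite[Theorem 4.1]{Kuhn} for Lipschitz functions), not from integrating the tail bound. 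You correctly identify the drift compensator inside $h$ as the heart of the matter, but the mechanism you name to handle it fails there; for $\beta\in[0,1)\cup(1,2)$ your computation does go through.
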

\begin{proof}
We choose $0<\varepsilon <1$ and a function $\chi_\varepsilon \in C_c^\infty (\RR^d)$ such that $0\leq \chi_\varepsilon \leq 1$, $\chi_\varepsilon (x)=1$ for $\norm{x}<\varepsilon /2$, and $\chi_\varepsilon (x)=0$ for $\norm{x}>\varepsilon$. We write
\begin{align*}
\int_{\RR^d} \left( F(x)-F(0)\right) p_t(\ud x) &= \int_{\RR^d} \left( F(x)-F(0)\right)\chi_\varepsilon (x) p_t(\ud x)\\ 
&\qquad +
\int_{\RR^d} \left( F(x)-F(0)\right)\left( 1-\chi_\varepsilon (x)\right) p_t(\ud x) = I_\varepsilon (t)+ II_\varepsilon (t).
\end{align*}
By \cite[Corollary 8.9]{Sato}, 
\begin{align*}
\lim_{t\to 0^+}t^{-1}II_\varepsilon (t) = \int_{\RR^d}  \left( F(x)-F(0)\right)\left( 1-\chi_\varepsilon (x)\right) \nu (\ud x).
\end{align*}
Using our assumption we estimate the first integral as follows
\begin{align*}
|I_\varepsilon (t)|\leq C \int_{\RR^d} \phi_\beta (x) p_t(\ud x),\quad \mathrm{where}\ \phi_\beta = \chi_\varepsilon (x)\norm{x}^\beta . 
\end{align*}
We observe that for $\beta >1$ the gradient $\nabla \phi_\beta (0) = 0$ and thus applying \cite[Theorem 4.1]{Kuhn} we obtain that
\begin{align*}
t^{-1}|I_\varepsilon (t)|\leq C\, t^{-1}\! \int_{\RR^d} \phi_\beta (x) p_t(\ud x) \xrightarrow{t\to 0^+}  C \int_{\RR^d} \phi_\beta (x) \nu (\ud x)\leq C_1 \varepsilon.
\end{align*}
The proof is finished.
\end{proof}

\begin{proof}[Proof of Theorem \ref{Thm_X_bdd_variation}]
We start with the case $0\leq \beta \leq 1$. Applying formula \eqref{Generator_formula} and Lemma \ref{Lemma_est} we obtain that
\begin{align*}
	\lim_{t\to 0^+}t^{-1}\left( H_g^\mu (t) - H_g^\mu (0)\right)
	 = \int_{\RR^d} (r(x)-r(0))\nu (\ud x).
	\end{align*}
	
Suppose that $1\leq \beta <2$ and $\mathbf{X}$ is symmetric. We set $F(x) = r(x)+r(-x)$ and then $|F(x)-F(0)|\leq C\norm{x}^\beta$, for all $\norm{x}<1$. We also observe that by symmetry $\gamma_0=0$ when $\beta =1$. Thus, by Lemma \ref{Lemma_est} we conclude that 
\begin{align*}
\lim_{t\to 0^+} t^{-1}\int_{\RR^d} (F(x)-F(0))p_t(\ud x) = \int_{\RR^d} (F(x)-F(0)) \nu (\ud x),
\end{align*}
and symmetry implies $t^{-1}\int_{\RR^d} (F(x)-F(0))p_t(\ud x) = 2\, t^{-1}( H_g^\mu (t) - r (0) )$, which gives the result.

Next, we consider the case when $\mathbf{X}$ is a general L\'{e}vy process. We set 
$$F(x) = r(x) - \sprod{x}{\nabla r(0)}\chi(x),$$ 
where $\chi$ is a compactly supported smooth function such that $0\leq \chi \leq 1$ and it is one for $\norm{x}\leq 1$ and zero for $\norm{x}>2$. Then our assumption implies that $|F(x)-F(0)|\leq C\norm{x}^\beta$, for $\norm{x}<1$. Thus for $1<\beta <2$, by Lemma \ref{Lemma_est}, we conclude that 
\begin{align}
\lim_{t\to 0^+} t^{-1}\int_{\RR^d} (F(x)-F(0))p_t(\ud x) &= \int_{\RR^d} (F(x)-F(0)) \nu (\ud x)\label{Levy_Lim}\\
&= \int_{\RR^d} \left( r(x)-r(0)-\sprod{x}{\nabla r(0)}\chi(x)\right) \nu (\ud x).\notag
\end{align}
To get the same limit in the case when $\beta =1$ we proceed as in Lemma \ref{Lemma_est}. For $0<\varepsilon <1$ we pick a function $\chi_\varepsilon \in C_c^\infty (\RR^d)$ such that $0\leq \chi_\varepsilon \leq 1$, $\chi_\varepsilon (x)=1$ for $\norm{x}<\varepsilon /2$, and $\chi_\varepsilon (x)=0$ for $\norm{x}>\varepsilon$. Then we have 
\begin{align*}
\lim_{t\to 0^+} t^{-1}\int_{\RR^d} \left( F(x)-F(0)\right)(1-\chi_\varepsilon (x))p_t( \ud x)  =
 \int_{\RR^d} \left( F(x)-F(0)\right)(1-\chi_\varepsilon (x)) \nu (\ud x).
\end{align*} 
We observe that $\nabla F(0)=0$ and whence $F(x)-F(0)= o(\norm{x})$, which allows us to estimate the remaining integral as follows
\begin{align*}
\frac{1}{t}\left\vert \int_{\RR^d} \left( F(x)-F(0)\right)\chi_\varepsilon (x) p_t( \ud x)\right\vert &\leq \frac{\varepsilon}{t}\int_{\RR^d}\norm{x}\chi_{\varepsilon}(x)p_t(\ud x) \\
&= \frac{\varepsilon}{t}\int_{\RR^d }\norm{x- t \gamma _0 }\chi_{\varepsilon}(x- t \gamma_0)p_t^0(\ud x)\\
&\leq \frac{\varepsilon}{t}\int_{\RR^d}\norm{x}\chi_{2\varepsilon}(x)p_t^0(\ud x) +\varepsilon \norm{\gamma _0},
\end{align*} 
where $p_t^0(\ud x)$ is the transition probability of the process $\mathbf{X}^0$ which is shifted by $\gamma_0$, i.e. $X_t^0 = X_t+t\gamma_0$. Since the function $x\mapsto \norm{x}\chi_{2\varepsilon} (x)$ is Lipschitz, we apply \cite[Theorem 4.1]{Kuhn} and deduce that the last integral tends to $\int_{\RR^d}\norm{x}\chi_{2\varepsilon}(x)\nu (\ud x)$, which implies \eqref{Levy_Lim}.
Finally, we write 
\begin{align*}
t^{-1}\int_{\RR^d} (F(x)-F(0))p_t(\ud x) =  t^{-1}( H_g^\mu (t) - r (0) ) -  \Big\langle t^{-1} \int_{\RR^d} x \chi(x) p_t(\ud x),\, \nabla r(0) \Big\rangle .
\end{align*}
The function $x\chi (x)\in C_{0}^{\infty}(\RR^d)$ and thus \eqref{Gener_form} yields
\begin{align*}
\lim_{t\to 0^+} \Big\langle t^{-1} \int_{\RR^d} x \chi(x) p_t(\ud x),\, \nabla r(0) \Big\rangle
= 
\Big\langle \gamma + \int_{\RR^d} x\left( \chi(x) - \IndFun{\{\norm{x}\leq 1\}}\right) \nu (\ud x),\, \nabla r(0) \Big\rangle.
\end{align*}
Hence
\begin{align*}
\lim_{t\to 0^+} t^{-1}( H_g^\mu (t) - H_g^\mu (0) ) = 
\sprod{\gamma}{\nabla r(0)} + \int_{\RR^d} \left( r(x)-r(0)-\sprod{x}{\nabla r(0)}\IndFun{\{\norm{x}\leq 1\}} \right) \nu (\ud x)
\end{align*}
and the proof is finished.
\end{proof}

%%%%%%%%%%%%%%%%%%%%%%%%%%%%%%%%%%%%%%%%%%%%%%%%
Before we prove Theorem \ref{Multivariate_Reg_Var_thm} we state an auxiliary lemma.
\begin{lemma}\label{Lemma_stable}
Let $\mathbf{X}$ be a symmetric L\'{e}vy process in $\RR^d$ with the transition probability $p_t(\ud x)$. Assume that its L\'{e}vy-Khintchine exponent $\psi $ satisfies \eqref{Mult_Reg_Cond} with functions $V \in \RInf$, $\alpha \in (0,2]$ and continuous $\Lambda \colon \mathbb{S}^{d-1}\to (0,\infty)$, and that condition \eqref{PsiStar} holds. Then $p_t(\ud x)=p_t(x)\ud x$ and
\begin{align}\label{limit_stable}
\lim_{t\to 0^+} \frac{p_t\left( \frac{x}{\psiI} \right)}{(\psiI )^d} = p_\Lambda (x),
\end{align}
where $p_\Lambda$ is the density defined at \eqref{Lambda_eq}. 
\end{lemma}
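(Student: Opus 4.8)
The plan is to identify the limiting object via characteristic functions and then upgrade convergence in distribution to the desired local (pointwise density) convergence by controlling the tails of $\hat p_t$. First I would show that for each fixed $t>0$ the measure $p_t$ is absolutely continuous with a bounded continuous density: since $\psi$ is symmetric and \eqref{PsiStar} holds, the estimate \eqref{psi_star_estimate} together with the assumption $\alpha>0$ (so that $\psi^*(r)\to\infty$) gives that $e^{-t\psi(\xi)}$ is integrable on $\RR^d$ for every $t>0$, whence $p_t(x)=(2\pi)^{-d}\int_{\RR^d}e^{-i\sprod{x}{\xi}}e^{-t\psi(\xi)}\,\ud\xi$ is a bounded continuous function. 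The same Fourier-inversion formula applies at the rescaled point, so
\begin{align*}
\frac{p_t\!\left(\tfrac{x}{\psiI}\right)}{(\psiI)^d}
=(2\pi)^{-d}\int_{\RR^d}e^{-i\sprod{x}{\xi}}\,e^{-t\,\psi\!\left(\psiI\,\xi\right)}\,\ud\xi,
\end{align*}
after the change of variables $\xi\mapsto \psiI\,\xi$. Thus the lemma reduces to passing to the limit $t\to0^+$ under this integral, with the target
\begin{align*}
e^{-\Lambda(x/\norm{x})\norm{x}^\alpha}=(2\pi)^{-d}\int_{\RR^d}e^{-i\sprod{x}{\xi}}p_\Lambda(\xi)\,\ud\xi,
\end{align*}
which is exactly \eqref{Lambda_eq} read through Fourier inversion (so $p_\Lambda$ is well defined once we know the left-hand side is a legitimate characteristic function, i.e. the characteristic function of an $\alpha$-stable law with spectral profile $\Lambda$; for $\alpha\in(0,2)$ this is a genuine L\'evy measure representation and for $\alpha=2$ it is a Gaussian).

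The pointwise convergence of the integrand is the easy part: writing $u=\psiI$, we have $u\to\infty$ as $t\to0^+$ because $V\in\RInf$ forces $V^-\in\calR_{1/\alpha}$ and hence $V^-(1/t)\to\infty$; then for $\xi\neq0$, writing $\xi=\norm{\xi}\theta$,
\begin{align*}
t\,\psi(u\xi)=t\,\psi\!\left(u\norm{\xi}\,\theta\right)
= \frac{\psi(u\norm{\xi}\,\theta)}{V(u\norm{\xi})}\cdot\frac{V(u\norm{\xi})}{V(u)}\cdot t\,V(u).
\end{align*}
The first factor tends to $\Lambda(\theta)$ by \eqref{Mult_Reg_Cond}; the second tends to $\norm{\xi}^\alpha$ by regular variation of $V$; and the third tends to $1$ because $u=V^-(1/t)$ gives $V(u)\asymp 1/t$ (more precisely $V(V^-(1/t))\to 1/t$ in the sense that $t\,V(V^-(1/t))\to1$, which follows from $V\in\RInf$ and \cite[Theorem 1.5.12]{bgt}). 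Hence $e^{-t\psi(u\xi)}\to e^{-\Lambda(\theta)\norm{\xi}^\alpha}$ pointwise, and after the substitution this matches the claimed limit of the scaled density once we justify interchanging limit and integral.

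The main obstacle is producing a $t$-uniform, integrable majorant for $\xi\mapsto e^{-t\psi(\psiI\,\xi)}$, so that dominated convergence applies. This is exactly where the hypotheses \eqref{PsiStar} and the Pruitt-type comparison \eqref{psi_star_estimate} enter: using $\psi(x)\asymp\psi^*(x)$ for $\norm{x}\ge1$ and monotonicity of $\psi^*$, together with the Potter bounds \eqref{eq:14} applied to the regularly varying function $\psi^*$ (of index $\alpha$), one obtains for $\norm{\xi}$ bounded below and $t$ small an estimate of the form $t\,\psi(\psiI\,\xi)\ge c\,\norm{\xi}^{\alpha-\varepsilon}$ with $c>0$ independent of small $t$; since $\alpha-\varepsilon$ can be chosen $>0$, $e^{-c\norm{\xi}^{\alpha-\varepsilon}}$ is integrable at infinity, while near the origin the integrand is bounded by $1$. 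Splitting the integral into $\{\norm{\xi}\le 1\}$ and $\{\norm{\xi}>1\}$ accordingly gives the dominating function. Once dominated convergence is justified, the displayed Fourier identity yields \eqref{limit_stable}, and the uniqueness of $p_\Lambda$ is immediate from injectivity of the Fourier transform.
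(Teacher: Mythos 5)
Your proposal is correct and follows essentially the same route as the paper: Fourier inversion of the rescaled density, the three-factor decomposition $t\psi(\psiI\xi)=\frac{\psi(\psiI\norm{\xi}\theta)}{V(\psiI\norm{\xi})}\cdot\frac{V(\psiI\norm{\xi})}{V(\psiI)}\cdot tV(\psiI)$ for the pointwise limit, and a Potter-bound lower estimate $t\psi(\psiI\xi)\geq c\norm{\xi}^{\alpha-\varepsilon}$ away from the origin to furnish an integrable majorant for dominated convergence. The only (harmless) imprecision is that $\psi^*$ is not known to be regularly varying in the multivariate setting, only comparable to $V$ via \eqref{PsiStar} and \eqref{Mult_Reg_Cond}; the paper accordingly applies the Potter bounds to $V$ rather than to $\psi^*$, and your estimate goes through after that one substitution.
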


\begin{proof}
Conditions \eqref{PsiStar} and \eqref{Mult_Reg_Cond} imply that
\begin{align*}
\lim_{x\to \infty}\frac{\psi (x)}{\log (1+\norm{x})}=\infty,
\end{align*}
and whence $p_t(\ud x) = p_t(x)\ud x$ with the density $p_t\in L_1(\RR^d)\cap C_0(\RR^d)$, see e.g. \cite[Theorem 1]{Knopova_Schilling}.
By the Fourier inversion formula, see \cite[Section 3.3]{Appl}, 
\begin{align}\label{integral_1}
\frac{p_t\left( \frac{x}{\psiI} \right)}{(\psiI )^d} =
\frac{1}{(2\pi)^d} \int_{\RR ^d}\cos \sprod{x}{\xi} e^{-t\psi \left( \psiI \xi\right)}\ud \xi .
\end{align}
By \cite[Theorem 1.5.12]{bgt}, $tV(\psiI)\to 1$. Set $\theta =\xi /\norm{\xi}$ and then we get that
\begin{align*}
\frac{\psi \left( \psiI \xi\right)}{1/t}&= \frac{\psi \left( \psiI \xi\right)}{V \left( \psiI \right)}\cdot
\frac{V(\psiI)}{1/t}\\
&\sim \frac{\psi \left( \psiI \norm{\xi} \theta \right)}{V\left( \psiI \norm{\xi} \right) }
\cdot
\frac{V\left( \psiI \norm{\xi} \right) }{V \left( \psiI \right)}
\to \Lambda (\theta)\norm{\xi}^{\alpha},
\quad t\to 0^+, 
\end{align*}
and this leads to
\begin{align*}
\lim_{t\to 0^+}e^{-t\psi \left( \psiI \xi\right)} = e^{-\Lambda (\theta) \norm{\xi}^\alpha}.
\end{align*}
Therefore, to finish the proof we apply the Dominated convergence theorem. 
First observe that equation \eqref{PsiStar} followed by \eqref{Mult_Reg_Cond} implies that
\begin{align}
\psi^*(r)\asymp V(r),\quad r\geq 1.\label{psi_star}
\end{align}
Now we split the integral in \eqref{integral_1} into two parts. 
According to Potter bounds \eqref{eq:14} applied for the function $V$, there is $r_0>0$ such that, for $t$ small enough and $\norm{\xi}\geq r_0$, 
\begin{align*}
t\psi \left( \psiI \xi\right)\geq \frac{1}{2}
\frac{\psi \left( \psiI \norm{\xi} \theta \right)}{V\left( \psiI \norm{\xi} \right) }
\cdot
\frac{V\left( \psiI \norm{\xi} \right) }{V \left( \psiI \right)}
\geq C \norm{\xi}^{\alpha/2},
\end{align*}
for some $C>0$ which does not depend on $\xi$. Here we used the fact that $\Lambda$ is bounded from below and \eqref{PsiStar} followed by \eqref{psi_star}.
This implies that $e^{-t\psi \left( \psiI \xi\right)}\leq e^{-C\norm{\xi}^{\alpha/2}}$, for $\norm{\xi}\geq r_0$ and $t$ small enough.
For $\norm{\xi}< r_0$ we bound $e^{-t\psi \left( \psiI \xi\right)}$ by one. 
The Dominated convergence theorem followed by the Fourier inversion formula proves \eqref{limit_stable}.
\end{proof}

%%%%%%%%%%%%%%%%%%%%%%% Proof of Theorem
\begin{proof}[Proof of Theorem \ref{Multivariate_Reg_Var_thm}.]
The proof is based on that of \cite[Theorem 2]{CG} but it requires numerous adjustments and improvements. 
We split the integral in \eqref{Generator_formula} into two parts
\begin{align*}
 H_g^\mu (t) - H_g^\mu (0) &=  \int_{\norm{x}\leq \frac{M}{\psiI}} p_t(x)\left( r (x) - r (0)\right) \ud x \\
 & \quad + 
  \int_{\norm{x}> \frac{M}{\psiI}} p_t(x) \left( r (x) - r (0)\right) \ud x 
  = I_1(t)+I_2(t),\nonumber
\end{align*}
for some fixed $M>1$.
We estimate $I_2(t)$ as follows
\begin{align}\label{INt}
\begin{aligned}
 \Big\vert\int_{\norm{x}> \frac{M}{\psiI}} p_t(x) \left( r (x) - r (0)\right) \ud x \Big\vert &\leq 
 C \int_{\norm{x}> \frac{M}{\psiI}} \left( 1\wedge \norm{x}^\beta \right) p_t(\ud x) \\
 &= C \int_{\norm{x}> \frac{M}{\psiI}}\int_0^{1\wedge \norm{x}^\beta }\ud u\, p_t(\ud x) \\
 &=C \int_0^1 \PP \left( \norm{X_t}> \frac{M}{\psiI}\vee u^{1/\beta} \right)\ud u .
 \end{aligned}
\end{align}%& = C \int_0^1\ud u \int_{\norm{x}>\frac{M}{\psiI}\vee u^{1/\beta}}p_t(\ud x) \\
This yields that
\begin{align*}
\left( \psiI \right)^\beta |I_2| \leq C M^{\beta}\PP \Big( \norm{X_t}>\frac{M}{\psiI}\Big) 
	\!+\! C \left( \psiI \right)^\beta \!\! \int_{\! \left( M/ \psiI\right)^{\beta }} ^1 \!\!\!\!\!\!\! \PP \left(\norm{X_t}>u^{1/\beta}\right) \ud u. 
\end{align*}
Thus using \eqref{estimate_Pruitt} followed by \eqref{psi_star_estimate},
\eqref{psi_star} and Potter bounds \eqref{eq:14} for $V$, we get that for $t$ small enough and for $0<\varepsilon < \alpha -\beta$,
\begin{align*}
M^\beta \PP \left( \norm{X_t}> M/\psiI \right) &\leq M^\beta t\psi^*\left( \psiI /M \right)\\ 
&\leq C_1 M^\beta \frac{V\left( \psiI /M \right)}{V\left( \psiI \right)}
\leq C_2 M^{\beta-\alpha +\varepsilon}.
\end{align*}
We proceed similarly with the second term. Applying Karamata's theorem \cite[Proposition 1.5.8]{bgt} and Potter bounds we obtain that for $t$ small enough
\begin{align*}
\left( \psiI \right)^\beta \int_{(M/\psiI )^\beta}^1 \PP \left( \norm{X_t}> u^{1/\beta } \right)\ud u 
&\leq C t\left( \psiI \right)^\beta   \int_{M/\psiI}^1 V(u^{-1})u^{\beta -1}\, \ud u \\ 
&\leq C_1 \frac{M^\beta}{\alpha -\beta}\, t\, V\left( \psiI /M \right)\\
&\leq C_2  \frac{M^\beta}{\alpha -\beta} \frac{V\left( \psiI /M \right)}{V\left( \psiI \right)}
	\leq C_3  \frac{M^{\beta-\alpha +\varepsilon}}{\alpha -\beta}.
\end{align*}

We are left to study the integral $I_1(t)$: by a change of variable we get 
\begin{align*}
[\psiI ]^\beta I_1(t) =\frac{1}{2} \int_{\norm{x}<M}
 \frac{p_t\left(\frac{x}{\psiI}\right)}{(\psiI)^d} K_\beta (x,t)\norm{x}^\beta \ud x,
\end{align*}
where 
\begin{align*}
K_\beta (x,t) = \frac{\left( r\left(\frac{x}{\psiI}\right)+r\left(-\frac{x}{\psiI}\right) - 2r (0)\right)}{\norm{x}^\beta/(\psiI)^\beta}.
\end{align*}
We claim that for any fixed $M>0$,
\begin{align}\label{claim111}
\lim_{t\to 0^+} \int_{\norm{x}<M}\!\! \frac{p_t\left( \frac{x}{\psiI}\right) }{(\psiI )^d}\,K_\beta (x,t)  \norm{x}^\beta \ud x
=
\int_{\norm{x}<M}  R_\beta \left( \frac{x}{\norm{x}}\right)  \norm{x}^\beta p_{\Lambda}(x) \ud x,
\end{align}
where $p_\Lambda (x)$ is given by \eqref{Lambda_eq}.
To show the claim we use the Dominated convergence theorem. By \eqref{beta_r_cond},
\begin{align*}
|K_\beta (x,t) |\leq L \quad
\textrm{and}\quad  
\lim_{t\to 0^+}K_\beta (x,t) = R_\beta \left( \frac{x}{\norm{x}}\right), \textrm{\ for\ any\ } x .
\end{align*}
Next, by \cite[Formula (23)]{BGR}, for $t$ small enough,					 
\begin{align*}
\frac{p_t\left( \frac{x}{\psiI} \right)}{(\psiI )^d} \leq \frac{p_t(0)}{(\psiI )^d}
\leq C,
\end{align*}
and, by Lemma \ref{Lemma_stable},				
\begin{align*}
\lim_{t\to 0^+} \frac{p_t\left( \frac{x}{\psiI}\right)}{(\psiI )^d} = p_\Lambda (x).
\end{align*}
The Dominated convergence theorem implies \eqref{claim111}.

If we let $M$ tend to infinity we finally conclude that.
\begin{align*}
\lim_{t\to 0^+}[\psiI]^\beta I_1 (t)= \frac{1}{2}
\int_{\RR^d} R_\beta \left( \frac{x}{\norm{x}}\right) \norm{x}^\beta p_\Lambda (x) \ud x
\end{align*}
and the result follows.
\end{proof}

\begin{proof}[Proof of Theorem \ref{Thm_Levy_measure}]
Take a smooth function $\chi_{\varepsilon}$ such that $0\leq \chi_{\varepsilon} \leq 1$ and it is one for $\varepsilon <\norm{x}<1/\varepsilon$, and zero for $2/\varepsilon >\norm{x}$ or $\norm{x}<\varepsilon /2$. 
By \eqref{Generator_formula} we can write
\begin{align*}
2\left( H_g^\mu (t) - H_g^\mu (0)\right) &=  \int_{\RR^d}  \left( r (x) +r(-x) - 2r (0)\right)  \left( 1- \chi_{\varepsilon}\left(\psiI x\right) \right) p_t(\ud x)
\\ &\quad + 
  \int_{\RR^d}  \left( r (x) +r(-x) - 2r (0)\right) \chi_{\varepsilon}\left( \psiI x\right)  p_t(\ud x) .
\end{align*}
We have
\begin{align*}
\left\vert \int_{\RR^d}  \left( r (x) +r(-x) - 2r (0)\right)  \left( 1- \chi_{\varepsilon}\left(\psiI x\right) \right) p_t(\ud x)\right\vert
&\leq C_1\left( \frac{\varepsilon}{\psiI}\right)^\beta \\
& + C_2\int_{\norm{x}>1/(\varepsilon \psiI)} \!\!\!\!\!\!\!\!\!\!\!\!\!\!\! \left( 1\wedge \norm{x}\right) ^\beta p_t(\ud x).
\end{align*}
We show that the last integral is small similarly as in the proof of Theorem \ref{Multivariate_Reg_Var_thm}, cf. \eqref{INt}. This and a change of variable yield that
\begin{align*}
2[\psiI]^\beta \left( H_g^\mu (t) - r (0)\right)  = o(1)+ 
\int_{\RR^d}  K_\beta (x,t) \norm{x}^\beta \chi_{\varepsilon}(x) \widetilde{p}_t(\ud x) ,
\end{align*}
where $\widetilde{p}_t(G) = p_t\left(G / \psiI \right)$, for any Borel set $G\subset \RR^d$ and 
\begin{align*}
K_\beta (x,t) = \frac{ r\left(\frac{x}{\psiI}\right)+r\left(-\frac{x}{\psiI}\right) - 2r (0) }{\norm{x}^\beta/(\psiI)^\beta}.
\end{align*}
Further we write
\begin{align*}
\int_{\RR^d}  K_\beta (x,t) \norm{x}^\beta \chi_{\varepsilon}(x) \widetilde{p}_t(\ud x)
&=
\int_{\RR^d} \left( K_\beta (x,t) -R_\beta\left( \frac{x}{\norm{x}}\right)\right)  \norm{x}^\beta \chi_{\varepsilon}(x) \widetilde{p}_t(\ud x)\\
& \qquad +
\int_{\RR^d}  R_\beta\left( \frac{x}{\norm{x}}\right) \norm{x}^\beta \chi_{\varepsilon}(x) \widetilde{p}_t(\ud x)
\end{align*}
and the first integral we estimate as follows
\begin{align*}
\Big\vert \int_{\RR^d} \left( K_\beta (x,t) -R_\beta\left( \frac{x}{\norm{x}}\right) \right)  \norm{x}^\beta \chi_{\varepsilon}(x) \widetilde{p}_t(\ud x)\Big\vert
\leq 
C\varepsilon \int_{\RR^d} \norm{x}^\beta \chi_{\varepsilon}(x) \widetilde{p}_t(\ud x) .
\end{align*}
To finish the proof we need the following observation: the measures $\widetilde{p}_t(\ud x)$, as $t$ goes to zero, converge weakly to the measure $\widetilde{p}_{\eta}$ which is uniquely determined by the formula
\begin{align*}
e^{-\int_{\RR^d} \left( 1-\cos \sprod{\xi}{y}\right) \eta (\ud y)} = \int_{\RR^d} e^{i\sprod{\xi}{y}}\widetilde{p}_{\eta}(\ud y).
\end{align*}
We work with characteristic functions and, since $\widetilde{p}_t(\ud x)$ is the distribution of the random variable $\psiI X_t$, it  suffices to show that
\begin{align*}
\lim_{t\to 0^+} t\psi \left( \psiI \xi \right) &= \int_{\RR^d} \left( 1-\cos \sprod{\xi}{y}\right) \eta (\ud y).
\end{align*}
We set $s = 1/\psiI$. Since $V\in \RInf$ with $\alpha >0$, we can investigate the above limit at zero along the new variable $s$ for which we have $t \sim 1/V(1/s)$, as $t$ tends to 0. Then in terms of the new variable we obtain
\begin{align*}
\frac{1}{V(1/s)}\, \psi \left( s^{-1}\xi \right) = \int_{\RR^d} \left( 1-\cos \sprod{\xi}{y}\right) \frac{\nu _s(\ud y)}{V(1/s)}
&= \int_{\RR^d} \left( 1- \chi_{\varepsilon}(y)\right)\left( 1-\cos \sprod{\xi}{y}\right) \frac{\nu _s(\ud y)}{V(1/s)}\\
&\quad +  \int_{\RR^d} \chi_{\varepsilon}(y) \left( 1-\cos \sprod{\xi}{y}\right)  \frac{\nu _s(\ud y)}{V(1/s)},
\end{align*}
where $\nu_s(G) = \nu (sG)$, for any Borel set $G\subset \RR^d$. Condition \eqref{Mult_Reg_Cond_Levy_meas} forces that the last integral converges, as $s$ goes to zero, to $\int \chi_\varepsilon (x) \left( 1-\cos \sprod{\xi}{y}\right) \eta (\ud y)$. Thus we are left to prove that the first integral approaches zero. We have
\begin{align*}
\int_{\RR^d} \left( 1- \chi_{\varepsilon}(y)\right)\left( 1-\cos \sprod{\xi}{y}\right) \frac{\nu _s(\ud y)}{V(1/s)}
\leq C \left( \int_{B_{\varepsilon }}+\int_{B_{1/\varepsilon }^c }\right) \left( 1-\cos \sprod{\xi}{y}\right) \frac{\nu _s(\ud y)}{V(1/s)}.
\end{align*}
For the second integral we apply Potter bounds: for $s$ small enough we have
\begin{align*}
\left\vert \int_{B_{1/\varepsilon }^c} \left( 1-\cos \sprod{\xi}{y}\right) \frac{\nu _s(\ud y)}{V(1/s)} \right\vert
\leq \frac{2}{V(1/s)} \nu_s(B_{1/\varepsilon}^c) =  \frac{2}{V(1/s)}V(\varepsilon /s)\leq  C\varepsilon ^{\alpha /2}.
\end{align*}
The first integral is bounded by
\begin{align*}
\int_{B_\varepsilon}\norm{\xi}^2\norm{y}^2\frac{\nu _s(\ud y)}{V(1/s)} = \frac{\norm{\xi}^2 \varepsilon ^2}{V(1/s)}\int_{B_{s\varepsilon }}\frac{\norm{y}^2}{(s\varepsilon)^2}\nu (\ud y)
\leq \norm{\xi}^2 \varepsilon ^2\frac{h(s\varepsilon)}{V(1/s)}\leq \norm{\xi}^2\varepsilon ^{2-\alpha},
\end{align*}
where in the last inequality we used the fact that for $s< 1$,
$h(s)\asymp V(1/s)$.
To obtain these two inequalities we write
\begin{align*}
h(s) &= \nu (B_s^c)+\frac{1}{s^2}\int_{B_s}\norm{y}^2\nu (\ud y) = V(1/s) + \frac{2}{s^2}\int_{B_s}\int_0^{\norm{y}}u\, \ud u\,  \nu (\ud y)  \\
&\leq V(1/s) + \frac{2}{s^2}\int_0^s u\, \nu(B_u^c)\ud u =  V(1/s) + \frac{2}{s^2}\int_0^s u\, V(1/u)\ud u.
\end{align*} 
The function $V(s)= \nu (B_{1/s}^c)$ is regularly varying at infinity of index $\alpha \in (\beta,2)$ and thus
the Karamata's theorem \cite[Section 1.6]{bgt} implies that the last integral behaves like $s^2V(1/s)$, as $s$ goes to zero, and we conclude the result. 
\end{proof}
%%%%%%%%%%%%%%%%%%%%%%%%%
\bibliographystyle{plain}

\end{document}